\DeclareMathOperator{\law}{law}
\DeclareMathOperator{\Id}{Id}
\DeclareMathOperator{\supp}{supp}
\newtheorem{theorem}{Theorem}[section]
\newtheorem{lemma}[theorem]{Lemma}
\newtheorem{corollary}[theorem]{Corollary}
\newtheorem{proposition}[theorem]{Proposition}
\theoremstyle{definition}
\newtheorem{definition}[theorem]{Definition}
\theoremstyle{remark}
\newtheorem{remark}[theorem]{Remark}
\numberwithin{equation}{section}
\begin{document}

\title{Busemann functions on the Wasserstein space}

\author{Guomin Zhu}
\address{Guomin Zhu, Department of Mathematics, Nanjing University, Nanjing 210093, China}
\email{zhu\_{}guomin@hotmail.com}

\author{Wen-Long Li}
\address{Wen-Long Li, School of Mathematics, Sun Yat-Sen University, Guangzhou, 510275, China}
\email{liwenlongchn@gmail.com}

\author{Xiaojun Cui}
\address{Xiaojun Cui, Department of Mathematics, Nanjing University, Nanjing 210093, China}
\email{xcui@nju.edu.cn}
\thanks{The third author is supported by the National Natural Science Foundation of China (Grants 11571166, 11631006, 11790272). The Project Funded by the Priority Academic Program Development of Jiangsu Higher Education Institutions (PAPD) and the Fundamental Research Funds for the Central Universities}

\subjclass[2010]{58E10, 60B10, 60H30}

\date{\today}

\keywords{Co-ray, Wasserstein space, Busemann function}

\begin{abstract}
We study rays and co-rays in the Wasserstein space $P_p(\mathcal{X})$ ($p > 1$) whose ambient space $\mathcal{X}$ is a complete, separable, non-compact, locally compact length space.
We show that rays in the Wasserstein space can be represented as probability measures concentrated on the set of rays in the ambient space.
We show the existence of co-rays for any prescribed initial probability measure.
We introduce Busemann functions on the Wasserstein space and show that co-rays are negative gradient lines in some sense.
\end{abstract}

\maketitle

\section{Introduction}
The Wasserstein distance plays an important role in optimal transport theory \cite{Ambrosio, Santambrogio, Villani}
and shows its advantages in numerous topics such as mean-field games \cite{Carmona}, machine learning \cite{Arjovsky} and Hamilton-Jacobi equations \cite{AmbrosioFeng, Gangbo}.
Given an ambient space $\mathcal{X}$ which is a Polish space, namely complete separable metric space,
the Wasserstein space $P_p(\mathcal{X})$ consisting of Borel probability measures on $\mathcal{X}$ with finite $p$-moment is also Polish \cite[Theorem 6.18]{Villani}.
Besides, if $\mathcal{X}$ is a length (resp. geodesic) space, then $P_p(\mathcal{X})$ is also a length (resp. geodesic) space \cite{Lisini}.
Lisini \cite{Lisini} also characterized geodesics in the Wasserstein space as Borel probability measures on $C([0,T]; \mathcal{X})$ concentrated on the set of geodesics in $\mathcal{X}$.
In the setting of $\mathcal{X}$ to be an Hadamard space, Bertrand and Kloeckner \cite{Bertrand} investigated the geometry of $P_2(\mathcal{X})$, especially rays and geodesic boundary.

Busemann functions, introduced by Busemann \cite{Busemann}, are powerful tools for studying the topology and geometry of some kinds of non-compact spaces.
For example, Cheeger and Gromoll \cite{Cheeger} used them to prove the celebrated splitting theorem for manifolds of nonnegative Ricci curvature.
Bangert \cite{Bangert} investigated the dynamics on a Riemannian $2$-torus by means of Busemann functions on the covering space.
For more applications of Busemann functions, we refer to \cite{BangertEmmerich, BangertGutkin, Innami, Peterson, Shiohama, Sormani1, Sormani2}.
Furthermore the Busemann function has been introduced into the study of Lorentzian geometry \cite{Beem, Galloway, JinCui}.
Also, on a non-compact complete Riemannian manifold, they are viscosity solutions to the Eikonal equation (see \cite{CuiCheng}).

The aim of this paper is to extend Busemann functions to the Wasserstein space and thereby to get similar results as in the conventional case.
In a metric space $(\mathcal{Y},d)$, a ray is a curve $\gamma \in C(\mathbb{R}_+; \mathcal{Y})$ satisfying $d(\gamma_s,\gamma_t) = |s - t| d(\gamma_0,\gamma_1)$ for any $s,t \ge 0$ where $k_\gamma := d(\gamma_0,\gamma_1)$ is called the speed of $\gamma$.
Another ray $\tilde{\gamma}$ is said to be a co-ray from $p$ to $\gamma$ if it is the limit of a sequence $\{\zeta^n\}$ as $n \to \infty$,
where $\zeta^n$ is a geodesic connecting $p^n$ and $\gamma_{t_n}$ with $p^n \to p$ and $t_n \to +\infty$.
Usually a Busemann function is determined by a ray.
As a preparation we give a characterization of rays in the Wasserstein space, that is, every ray in $P_p(\mathcal{X})$ can be represented by a random ray in $\mathcal{X}$.
The main ingredient in the study of Busemann functions is co-ray.
A common method to show the existence of co-rays is selecting a sequence of geodesics with some good properties which by Ascoli's theorem admits a subsequence converging to a co-ray.
In the case of $P_p(\mathcal{X})$, the essential difficulty lies in the absence of local compactness:
any non-compact Wasserstein space can not be locally compact \cite[Remark 7.1.9]{Ambrosio}.
Meanwhile this fact implies that a Wasserstein space is not a G-space on which the Busemann function is initially defined,
because G-spaces are finitely compact i.e., every bounded infinity set has at least one accumulation point.

Without local compactness, our approach consists of two steps.
First we construct probability measures $\mathnormal{\Pi}^n$ on $C(\mathbb{R}_+; \mathcal{X})$ representing the geodesics $\zeta^n$.
The sequence $\{\mathnormal{\Pi}^n\}$ admits some accumulation points in the sense of weak convergence if it is confirmed to be tight.
Then we show that time sections of the weak convergent subsequences are actually uniformly integrable,
which means that the associated geodesics are convergent pointwisely with respect to the Wasserstein distance $W_p$.

With the assumptions of $p > 1$ and $\mathcal{X}$ to be a complete, separable, non-compact, locally compact length space, main results of this paper are stated as follows.

\begin{theorem}\label{main result 1}
Each point in $P_p(\mathcal{X})$ is the origin of at least one unit-speed ray.
\end{theorem}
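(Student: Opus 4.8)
The plan is to realize the desired ray as a limit of longer and longer geodesic segments emanating from $\mu_0$, working not in $P_p(\mathcal{X})$ itself (which is not locally compact) but at the level of the ambient space $\mathcal{X}$, where compactness is available. Indeed, since $\mathcal{X}$ is a complete, locally compact length space, the Hopf--Rinow theorem makes $\mathcal{X}$ proper (closed bounded sets are compact) and geodesic; in particular $P_p(\mathcal{X})$ is geodesic by Lisini's result quoted above. Non-compactness of $\mathcal{X}$ is what lets me send targets to infinity, producing an infinite rather than merely long limiting curve.

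Concretely, first I would fix a base point $o \in \mathcal{X}$ and choose targets $\nu^n \in P_p(\mathcal{X})$ with $L_n := W_p(\mu_0,\nu^n) \to \infty$; for instance $\nu^n = \delta_{x_n}$ with $d(o,x_n)\to\infty$ suffices, since then $L_n \ge d(o,x_n) - \left(\int_{\mathcal{X}} d(o,x)^p\,d\mu_0(x)\right)^{1/p}$. For each $n$ let $\zeta^n:[0,L_n]\to P_p(\mathcal{X})$ be a unit-speed geodesic from $\mu_0$ to $\nu^n$, so that $W_p(\zeta^n_s,\zeta^n_t)=|s-t|$. By Lisini's characterization of geodesics, each $\zeta^n$ is represented by a probability measure $\Pi^n$ on $C([0,L_n];\mathcal{X})$ concentrated on unit-speed geodesics of $\mathcal{X}$ with $(e_t)_{\#}\Pi^n=\zeta^n_t$, where $e_t$ denotes evaluation at time $t$.

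The heart of the argument is extracting a limit of $\{\Pi^n\}$. On each fixed interval $[0,T]$ I would prove tightness of $\{\Pi^n|_{[0,T]}\}$ directly from properness of $\mathcal{X}$: given $\varepsilon>0$, pick a compact $K_0$ with $\mu_0(K_0)\ge 1-\varepsilon$ and let $K$ be the set of $1$-Lipschitz curves $\gamma:[0,T]\to\mathcal{X}$ with $\gamma(0)\in K_0$. Since any such curve satisfies $\gamma(t)\in \overline{B}(K_0,T)$, which is compact, Arzel\`a--Ascoli shows $K$ is compact in $C([0,T];\mathcal{X})$; and because the geodesics supporting $\Pi^n$ are $1$-Lipschitz and start in $\supp\mu_0$, we get $\Pi^n(K)\ge \mu_0(K_0)\ge 1-\varepsilon$ for all $n$. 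Prokhorov's theorem together with a diagonal argument over $T\to\infty$ then yields a subsequence $\Pi^{n_k}$ converging weakly on $C(\mathbb{R}_+;\mathcal{X})$ to some $\Pi$. As a weak limit of measures concentrated on geodesic segments of lengths $L_{n_k}\to\infty$, $\Pi$ is concentrated on curves that are geodesic on every compact subinterval, i.e.\ on rays of $\mathcal{X}$.

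It remains to check that $\mu_t:=(e_t)_{\#}\Pi$ is the sought unit-speed ray. Weak convergence $\Pi^{n_k}\rightharpoonup\Pi$ gives narrow convergence $\zeta^{n_k}_t\rightharpoonup\mu_t$ at each fixed $t$; to upgrade this to $W_p$-convergence I would verify uniform integrability of $d(o,e_t(\cdot))^p$ under $\Pi^{n_k}$, which here is immediate: since $(e_0)_{\#}\Pi^{n_k}=\mu_0$ for every $k$, the $1$-Lipschitz bound gives $d(o,e_t(\gamma))^p\le 2^{p-1}\big(d(o,e_0(\gamma))^p+t^p\big)$, whose law under $\Pi^{n_k}$ is independent of $k$ and integrable. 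Consequently $W_p(\zeta^{n_k}_s,\zeta^{n_k}_t)\to W_p(\mu_s,\mu_t)$, and passing $W_p(\zeta^{n_k}_s,\zeta^{n_k}_t)=|s-t|$ to the limit shows $W_p(\mu_s,\mu_t)=|s-t|$ for all $s,t\ge 0$; that is, $t\mapsto\mu_t$ is a unit-speed ray originating at $\mu_0$. The step I expect to be most delicate is exactly this passage from weak to $W_p$-convergence of the time sections, together with confirming that the limit curves are genuine rays on all of $\mathbb{R}_+$ rather than finite geodesics; both reflect the missing local compactness of $P_p(\mathcal{X})$, which is circumvented by transferring the compactness problem to the proper space $\mathcal{X}$.
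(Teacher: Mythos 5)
Your overall strategy (lift unit-speed geodesics from $\mu_0$ to Dirac targets $\delta_{x_n}$ escaping to infinity, extract a weak limit on curve space, then upgrade time-sections to $W_p$-convergence) is viable, and it is genuinely different from the paper's proof, which instead embeds a ray $\gamma$ of $\mathcal{X}$ as the Dirac ray $(\delta_{\gamma_t})_{t\ge 0}$ and applies the co-ray existence theorem (Theorem \ref{coray existence}) to the given initial measure. But as written your argument has a genuine gap: both of your compactness steps rest on the claim that the curves carrying $\Pi^n$ are unit-speed, hence $1$-Lipschitz, geodesics of $\mathcal{X}$, and that claim is false. By Proposition \ref{existence}, a unit-speed geodesic $(\zeta^n_t)_{0\le t\le L_n}$ of $P_p(\mathcal{X})$ is represented by a measure concentrated on \emph{constant-speed} geodesics whose individual speed is the random number $d(\gamma_0,\gamma_{L_n})/L_n$; this is forced, since each sample path must reach $\gamma_{L_n}$ at time exactly $L_n$. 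Only the $L^p$-mean of the speeds equals $1$, i.e.\ $\int d(\gamma_0,\gamma_t)^p\,d\Pi^n=t^p$. Already in your own setting: for $\mathcal{X}=\mathbb{R}$, $\mu_0=\frac12(\delta_0+\delta_1)$, $x_n=n$, one has $L_n=\bigl(\tfrac{n^p+(n-1)^p}{2}\bigr)^{1/p}<n$, so the sample geodesic starting at $0$ has speed $n/L_n>1$; and if $\mu_0$ has unbounded support the sample speeds are unbounded. Consequently the inclusion $\gamma(t)\in\overline{B}(K_0,T)$, the equicontinuity needed for Arzel\`a--Ascoli, and the dominating bound $d(o,e_t(\gamma))^p\le 2^{p-1}\bigl(d(o,e_0(\gamma))^p+t^p\bigr)$ ``with law independent of $k$'' all fail; these are exactly the tightness and uniform-integrability steps that you correctly identify as the heart of the matter.

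Both steps can be repaired, and your choice of Dirac targets actually makes the repair easier than the paper's general machinery. For tightness, replace the Lipschitz bound by the Chebyshev-type tail estimate of Lemma \ref{tail ieqn}: since $(e_0,e_t)_\#\Pi^n$ is a coupling of cost $t^p$, one has $\Pi^n\{d(\gamma_0,\gamma_t)>R\}\le (t/R)^p$, which yields both conditions of Theorem \ref{criterion}; this is precisely the proof of Theorem \ref{tight}, which you may invoke verbatim with $\nu^n\equiv\mu_0$. For uniform integrability, use that your terminal point is deterministic: $\gamma_{L_n}=x_n$ holds $\Pi^n$-a.s., hence
\[
d(o,\gamma_\tau)\le d(o,\gamma_0)+\frac{\tau}{L_n}\,d(\gamma_0,x_n)\le\Bigl(1+\frac{\tau}{L_n}\Bigr)d(o,\gamma_0)+\frac{\tau}{L_n}\,d(o,x_n),
\]
and $d(o,x_n)=W_p(\delta_o,\delta_{x_n})\le L_n+C$ with $C=W_p(\delta_o,\mu_0)$, so $d(o,\gamma_\tau)$ is dominated by $(1+\tau/L_n)\,d(o,\gamma_0)+\tau(1+C/L_n)$, a variable whose law is governed by $\mu_0$ alone; your domination argument for uniform integrability then goes through. (It is this deterministic-endpoint trick that the paper must replace, for general targets $\mu_{t_n}$, by the gluing construction of Lemma \ref{modified gluing lemma} inside the proof of Theorem \ref{coray existence}.) With these two substitutions your proof becomes correct and gives a self-contained alternative to the paper's reduction to co-rays.
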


$\mathcal{X}$ is called non-branching if any geodesic $\zeta: [a,b] \to \mathcal{X}$ is uniquely determined by its restriction to a nontrivial time-interval.
\begin{theorem}\label{main result 2}
Let $(\mu_t)_{t \ge 0}$ be a unit-speed ray in $P_p(\mathcal{X})$.
\begin{enumerate}[(i)]
\item  For any $\nu_0 \in P_p(\mathcal{X})$, there exists at least one co-ray from $\nu_0$ to $(\mu_t)_{t \ge 0}$;
\item  Moreover if $\mathcal{X}$ is non-branching, let $(\nu_t)_{t \ge 0}$ be one of the co-rays, then for any $\tau > 0$ the subray $(\nu_{t + \tau})_{t \ge 0}$ is the unique co-ray from $\nu_\tau$ to $(\mu_t)_{t \ge 0}$.
\end{enumerate}
\end{theorem}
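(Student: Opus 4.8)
The plan is to follow the two-step scheme sketched in the introduction, carried out on the path space $C(\mathbb{R}_+;\mathcal{X})$ rather than on the curves themselves, since the lack of local compactness of $P_p(\mathcal{X})$ rules out a direct Ascoli argument. I first note that, $\mathcal{X}$ being complete, locally compact and a length space, the Hopf--Rinow theorem makes it proper and geodesic, so that $P_p(\mathcal{X})$ is geodesic and the connecting geodesics below exist. For part (i), fix $\nu_0 \in P_p(\mathcal{X})$ and a sequence $t_n \to +\infty$, and for each $n$ pick a unit-speed geodesic $\zeta^n$ joining $\nu_0$ to $\mu_{t_n}$, defined on $[0,\ell_n]$ with $\ell_n = W_p(\nu_0,\mu_{t_n}) \to \infty$. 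By Lisini's characterization \cite{Lisini} each $\zeta^n$ lifts to a probability measure $\Pi^n$ on $C(\mathbb{R}_+;\mathcal{X})$ concentrated on geodesics of $\mathcal{X}$ and satisfying $(e_t)_\#\Pi^n = \zeta^n_t$, where $e_t$ denotes evaluation at time $t$. The first key step is the tightness of $\{\Pi^n\}$: since $(e_0)_\#\Pi^n = \nu_0$ is a single, hence tight, measure, fix a compact $K \subset \mathcal{X}$ carrying most of its mass; because $\mathcal{X}$ is proper and the lifted paths are equi-Lipschitz in the $L^p$-averaged sense, Markov's inequality confines a fraction $1-\varepsilon$ of the mass to the compact set of equicontinuous paths issuing from $K$, and Ascoli's theorem yields tightness. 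Prokhorov's theorem then produces a subsequence $\Pi^{n_k} \rightharpoonup \Pi$.

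The second step is to identify $\Pi$ with a co-ray. As weak limits of geodesics of $\mathcal{X}$ are again geodesics and $\ell_{n_k} \to \infty$, the limit $\Pi$ is concentrated on rays of $\mathcal{X}$, so by the representation underlying Theorem \ref{main result 1} the curve $\nu_t := (e_t)_\#\Pi$ is a candidate ray. The delicate point is that $\Pi^{n_k} \rightharpoonup \Pi$ yields only narrow convergence of the time-sections, whereas a co-ray must be the pointwise $W_p$-limit of the $\zeta^{n_k}$. I would bridge this by proving that, for each fixed $t$, the family $\{(e_t)_\#\Pi^{n_k}\}_k$ is uniformly $p$-integrable: the initial $p$-th moment is fixed and the speeds are bounded, so the $p$-th moments cannot escape to infinity uniformly in $t$. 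Narrow convergence then upgrades to $W_p$-convergence, giving $\zeta^{n_k}_t \to \nu_t$ in $W_p$ and constant unit speed of $(\nu_t)$; hence $(\nu_t)$ is a co-ray from $\nu_0$.

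For part (ii), assume $\mathcal{X}$ non-branching and let $(\nu_t)$ be a co-ray from $\nu_0$ with representing measure $\Pi$. I would first verify that the shifted curve $(\nu_{t+\tau})_{t\ge 0}$ is again a co-ray from $\nu_\tau$; this is a soft consequence of the definition, the tails $\zeta^{n_k}|_{[\tau,\ell_{n_k}]}$ forming an admissible approximating sequence issuing from $\nu_\tau$. For uniqueness, let $(\tilde\nu_t)$ be any co-ray from $\nu_\tau$, represented by $\tilde\Pi$ with $(e_0)_\#\tilde\Pi = \nu_\tau$. The crux is that non-branching lifts from $\mathcal{X}$ to the representing measures: two rays of $\mathcal{X}$ starting at the same point and asymptotic to the same direction must coincide, so an asymptote is determined by its initial section once the base marginal is fixed. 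Matching the disintegrations of the shifted $\Pi$ and of $\tilde\Pi$ over their common marginal $\nu_\tau$ then forces $\tilde\nu_t = \nu_{t+\tau}$ for all $t$.

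I expect the tightness-and-uniform-integrability argument of part (i) to be the main obstacle. Because $P_p(\mathcal{X})$ is never locally compact in the non-compact setting, every bit of compactness must be produced on $C(\mathbb{R}_+;\mathcal{X})$ and then transported back through the uniform $p$-integrability of the time-sections; controlling the escape of mass and of $p$-th moments uniformly in both $n$ and $t$ is precisely what makes the limit exist and be a genuine constant-speed co-ray.
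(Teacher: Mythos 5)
Your part (i) follows the paper's strategy (lift to $C(\mathbb{R}_+;\mathcal{X})$, tightness via Markov's inequality, Prokhorov, then upgrade narrow convergence of time-sections to $W_p$-convergence), and your tightness sketch is sound. But the step you yourself identify as the bridge is exactly where the argument fails: you justify uniform $p$-integrability of $\{(e_\tau)_\#\Pi^{n_k}\}_k$ by saying ``the initial $p$-th moment is fixed and the speeds are bounded, so the $p$-th moments cannot escape to infinity.'' That is a non-sequitur: bounded $p$-th moments never preclude escape of moment mass (a fraction $\varepsilon_n \to 0$ of mass at distance $R_n \to \infty$ with $\varepsilon_n R_n^p \asymp 1$ has bounded moments but is not uniformly integrable, and narrow convergence plus bounded moments does not give $W_p$-convergence --- this failure is the whole point of Theorem~\ref{wasserstein converge}). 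Quantitatively, $d(\gamma^n_0,\gamma^n_\tau) = (\tau/L_n)\,d(\gamma^n_0,\gamma^n_{L_n})$, so what you need is uniform integrability of the \emph{normalized} displacements $d(\gamma^n_0,\gamma^n_{L_n})^p/L_n^p$ under the optimal couplings of $(\nu_0,\mu_{t_n})$, and nothing in your argument controls these tails uniformly in $n$. The paper's resolution is a genuine extra idea absent from your proposal: a modified gluing lemma (Lemma~\ref{modified gluing lemma}, via Acosta's theorem) glues the lifted geodesic to the ray's \emph{own} optimal coupling of $(\mu_{t_n},\mu_0)$ along the common marginal $\mu_{t_n}$, producing a point $y$ with law $\mu_0$ attached to the endpoint $\gamma^n_{L_n}$ so that $d(y,\gamma^n_{L_n})$ has the law of $t_n\,d(\eta_0,\eta_1)$, where $\eta$ is the random ray representing $(\mu_t)$. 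The triangle inequality $d(x_0,\gamma^n_\tau) \le (1+\tau/L_n)\,d(x_0,\gamma^n_0) + (\tau/L_n)\,d(x_0,y) + (\tau/L_n)\,d(y,\gamma^n_{L_n})$ then splits the moment into a term controlled by $W_p(\nu^n_0,\nu_0)\to 0$, a constant term with vanishing prefactor, and a term whose normalized law is the \emph{fixed} law of $d(\eta_0,\eta_1)$ --- trivially uniformly integrable. This comparison with the ray's own coupling is what kills the escape of mass; without it, part (i) is an assertion, not a proof.

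Part (ii) has a more basic flaw. Your uniqueness rests on the claim that in a non-branching $\mathcal{X}$ ``two rays starting at the same point and asymptotic to the same direction must coincide.'' That is false in general: non-branching means a geodesic is determined by its restriction to a nontrivial interval; it does not give uniqueness of co-rays from a point. Already on a paraboloid of revolution (smooth, hence non-branching) a point can admit two distinct co-rays to the same ray; classically, uniqueness holds only from \emph{interior} points of a co-ray, which is precisely why the theorem claims uniqueness from $\nu_\tau$ with $\tau>0$ and not from $\nu_0$ --- your principle, if true, would give uniqueness at $\tau=0$ as well. Moreover, the reduction to $\mathcal{X}$ via ``matching disintegrations over the common marginal $\nu_\tau$'' is unsupported: it is not established (here or in the paper) that a co-ray in $P_p(\mathcal{X})$ is represented by a measure concentrated on co-rays of $\mathcal{X}$, and two path measures sharing one time-marginal need not have comparable disintegrations. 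The paper works entirely in $P_p(\mathcal{X})$: it first shows co-rays calibrate the Busemann function, $b_\mu(\nu_{t_1}) - b_\mu(\nu_{t_2}) = t_2 - t_1$ (Proposition~\ref{gradient line}); then, given any co-ray $(\tilde\nu_t)_{t\ge0}$ from $\nu_\tau$, it concatenates $(\nu_t)_{0\le t\le \tau}$ with $(\tilde\nu_{t-\tau})_{t\ge\tau}$ and uses calibration together with the $1$-Lipschitz bound $b_\mu(\nu'_s)-b_\mu(\nu'_t)\le W_p(\nu'_s,\nu'_t)$ to show the concatenation is a ray; finally it invokes non-branching of $P_p(\mathcal{X})$ itself --- equivalent to non-branching of $\mathcal{X}$ for $p>1$ by \cite[Corollary 7.32]{Villani} --- to force the concatenation to coincide with $(\nu_t)_{t\ge0}$ (Lemma~\ref{nonbranching coray}), after which existence from Theorem~\ref{coray existence} completes the uniqueness claim. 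You would need both ingredients (the Busemann calibration and the transfer of non-branching to the Wasserstein space) to repair your argument; your observation that the shifted sequence of geodesic tails exhibits the subray as \emph{a} co-ray from $\nu_\tau$ is correct, but it is the easy half.
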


For a unit-speed ray $(\mu_t)_{t \ge 0}$ in $P_p(\mathcal{X})$,
the Busemann function
$$b_\mu(\nu) = \lim\limits_{t \to +\infty} [W_p(\nu,\mu_t) - t]$$
is well defined on $P_p(\mathcal{X})$.

\begin{theorem}\label{main result 3}
Let $(\mu_t)_{t \ge 0}$ and $(\nu_t)_{t \ge 0}$ be two unit-speed rays in $P_p(\mathcal{X})$.
Assume $\mathcal{X}$ is non-branching, then $(\nu_t)_{t \ge 0}$ is a co-ray from $\nu_0$ to $(\mu_t)_{t \ge 0}$ if and only if
$$b_\mu(\nu_t) - b_\mu(\nu_s) = s - t$$
holds for any $s,t \ge 0$.
\end{theorem}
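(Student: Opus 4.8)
The plan is to prove the two implications separately. Throughout I would write $b_\mu^T(\cdot) := W_p(\cdot,\mu_T) - T$ for the finite-time approximations, so that $b_\mu = \lim_{T\to\infty} b_\mu^T$; the triangle inequality together with the ray identity $W_p(\mu_{T'},\mu_T) = T - T'$ (for $T' \le T$) shows that $T \mapsto b_\mu^T(\nu)$ is non-increasing, which is why the limit exists and why $b_\mu$ is $1$-Lipschitz. Since $(\nu_t)$ is unit-speed, the $1$-Lipschitz bound already gives $b_\mu(\nu_t) - b_\mu(\nu_s) \ge s - t$ for $s \le t$; hence in both directions the real content is the matching upper bound, i.e. that $b_\mu$ decreases along $(\nu_t)$ at the maximal rate.

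For the forward implication (co-ray $\Rightarrow$ unit decrease), which needs no branching hypothesis, I would argue along the defining geodesics. Suppose $(\nu_t)$ is the limit of unit-speed geodesics $\zeta^n \colon [0,L_n] \to P_p(\mathcal X)$ with $\zeta^n_0 = p^n \to \nu_0$, $\zeta^n_{L_n} = \mu_{t_n}$, $t_n \to \infty$, and $L_n = W_p(p^n,\mu_{t_n})$. Because $\zeta^n$ is minimizing, its point at parameter $s$ satisfies $W_p(\zeta^n_s,\mu_{t_n}) = L_n - s$, whence
$$b_\mu^{t_n}(\zeta^n_s) = (L_n - s) - t_n = b_\mu^{t_n}(p^n) - s.$$
Using $|b_\mu^{t_n}(\zeta^n_s) - b_\mu^{t_n}(\nu_s)| \le W_p(\zeta^n_s,\nu_s) \to 0$ and $|b_\mu^{t_n}(p^n) - b_\mu^{t_n}(\nu_0)| \le W_p(p^n,\nu_0) \to 0$, together with $b_\mu^{t_n} \to b_\mu$ pointwise (as $t_n \to \infty$), the displayed identity passes to the limit and yields $b_\mu(\nu_s) = b_\mu(\nu_0) - s$ for every $s$, which is exactly the required relation.

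For the backward implication (unit decrease $\Rightarrow$ co-ray) I would proceed in two stages. Fix $\tau > 0$. By Theorem \ref{main result 2}(i) there is a co-ray $\sigma^\tau$ from $\nu_\tau$ to $(\mu_t)$, and by the forward implication $b_\mu(\sigma^\tau_s) = b_\mu(\nu_\tau) - s$. Concatenating $\nu|_{[0,\tau]}$ with $\sigma^\tau$ gives a continuous unit-speed curve $\rho$ with $\rho_t = \nu_t$ on $[0,\tau]$ and $\rho_t = \sigma^\tau_{t-\tau}$ on $[\tau,\infty)$; combining the hypothesis on $\nu$ with the decrease of $b_\mu$ along $\sigma^\tau$ gives $b_\mu(\rho_t) = b_\mu(\nu_0) - t$ for all $t$. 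The $1$-Lipschitz lower bound then forces $W_p(\rho_s,\rho_t) \ge t - s$, while the triangle inequality gives the reverse, so $\rho$ is a unit-speed ray. Since $\rho$ and $\nu$ are two rays agreeing on the nontrivial interval $[0,\tau]$, the non-branching property of $\mathcal X$ — inherited by $P_p(\mathcal X)$, exactly as in the proof of Theorem \ref{main result 2}(ii) — forces $\rho = \nu$; in particular $(\nu_{t+\tau})_{t\ge0} = \sigma^\tau$ is a co-ray from $\nu_\tau$.

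It remains to pass from $\nu_\tau$ back to $\nu_0$, which is where I expect the main technical care. For a sequence $\tau_k \downarrow 0$ the co-rays $(\nu_{t+\tau_k})_{t\ge 0}$ start at $\nu_{\tau_k} \to \nu_0$ and, by continuity of $\nu$, converge uniformly on compacts to $(\nu_t)_{t\ge0}$. Unwinding the definition of each $(\nu_{t+\tau_k})_{t\ge0}$ as a limit of geodesics from points near $\nu_{\tau_k}$ to $\mu_s$ with $s \to \infty$, a diagonal extraction produces geodesics from points $p^k \to \nu_0$ to $\mu_{s_k}$ with $s_k \to \infty$ whose limit is $(\nu_t)_{t\ge0}$, exhibiting $(\nu_t)$ as a co-ray from $\nu_0$ and closing the argument. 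The two points I would watch most carefully are the inheritance of non-branching from $\mathcal X$ to the Wasserstein space for infinite rays (needed to justify $\rho = \nu$) and the diagonal limit, ensuring the approximating geodesics and their endpoints converge in the precise sense demanded by the definition of a co-ray.
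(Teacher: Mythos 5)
Your proposal is correct, and two of its three components coincide with the paper's own proof: your forward implication (evaluating the finite-time approximations $b_\mu^{t_n}$ along the defining geodesics, where $W_p(\zeta^n_s,\mu_{t_n})=L_n-s$, and passing to the limit) is the argument of Proposition \ref{gradient line}(i), and your concatenation step at a fixed $\tau>0$ (build $\rho$ from $\nu|_{[0,\tau]}$ and a co-ray $\sigma^\tau$ from $\nu_\tau$, certify $\rho$ is a unit-speed ray by playing the $1$-Lipschitz lower bound $W_p(\rho_s,\rho_t)\ge |b_\mu(\rho_s)-b_\mu(\rho_t)|$ against the triangle inequality, then invoke non-branching of $P_p(\mathcal{X})$ to force $\rho=\nu$) is exactly Lemma \ref{nonbranching coray}. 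Where you genuinely diverge is the final descent from $\nu_\tau$ to $\nu_0$. The paper's Theorem \ref{gradient line op} selects geodesics from $\nu_{t_{n'}}$ to $\mu_{s_{n'}}$ that are $2^{-n'}$-close to $\nu$ at the single time $\tau$ (a selection that itself leans on the \emph{uniqueness} part of Lemma \ref{nonbranching coray}), re-runs the compactness machinery of Theorem \ref{coray existence} to extract a limiting co-ray $(\lambda_t)_{t\ge0}$ from $\nu_0$, and then identifies $\lambda=\nu$ via uniqueness again (for $t\ge\tau$) plus a second application of non-branching (for $t\le\tau$). You instead diagonalize the geodesic sequences furnished by the \emph{definition} of each co-ray $(\nu_{t+\tau_k})_{t\ge0}$, producing geodesics from points $p^k\to\nu_0$ to $\mu_{s_k}$ with $s_k\to\infty$ that converge pointwise to $\nu$ itself, so that $\nu$ is a co-ray by definition with no further appeal to compactness, uniqueness, or non-branching; this is a real simplification of the last step. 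Its one technical cost is the point you flagged: the co-ray definition gives only pointwise-in-$t$ convergence, so to choose the diagonal index $m_k$ with $\sup_{t\in[0,k]}W_p(\zeta^{k,m_k}_t,\nu_{t+\tau_k})<2^{-k}$ you must upgrade to locally uniform convergence; this is legitimate because all curves involved are unit-speed, hence $1$-Lipschitz and equicontinuous (equivalently, enforce closeness on a grid of mesh $1/k$ in $[0,k]$ and interpolate via the Lipschitz bound). Then for each fixed $t$ one gets $W_p(\zeta^{k,m_k}_t,\nu_t)\le 2^{-k}+\tau_k\to0$ and $W_p(p^k,\nu_0)\le 2^{-k}+\tau_k\to0$, which is precisely what the definition of co-ray demands; note that the definition permits origins merely converging to $\nu_0$, which is what lets you bypass the paper's uniqueness-based selection of geodesics starting exactly at $\nu_{t_{n'}}$.
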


This paper is organized as follows.
Section 2 presents some preliminaries.
In Section 3 we study the structure of rays in the Wasserstein space.
More precisely, we show that each ray can be represented as a probability measure concentrated on the set of rays in the ambient space $\mathcal{X}$.
Section 4 presents the existence of co-rays in $P_p(\mathcal{X})$ and thus Theorem \ref{main result 1} and Theorem \ref{main result 2} (i) are proved.
To prove a sequence of geodesic is compact in the pointwise convergence topology with respect to $W_p$,
the key point is to show the uniformly integrability of the projections at any fixed time $\tau \ge 0$.
In Section 5 we define the Busemann function on the Wasserstein space and obtain some analogue fundamental properties to the conventional case.
The proofs of Theorem \ref{main result 2} (ii) and Theorem \ref{main result 3} are completed in this section.

\section{Preliminaries}

\subsection{Convergence in the Wasserstein space}

Let $(\mathcal{X},d)$ be a Polish space.
For $p \ge 1$, the Wasserstein space $P_p(\mathcal{X})$ of order $p$ is the set of Borel probability measures with finite $p$-moments, i.e.,
$$P_p(\mathcal{X})=\left\{\mu\in P(\mathcal{X})\left|\ \int_\mathcal{X} d(x_0,x)^p d\mu(x) < +\infty\right\}\right.,$$
where $x_0 \in \mathcal{X}$ is fixed.
This space does not depend on the choice of $x_0$.
Given $\mu$, $\nu \in P_p(\mathcal{X})$,
we denote by $\mathnormal{\Pi}(\mu, \nu)$ the set of Borel probability measure on $\mathcal{X} \times \mathcal{X}$ whose marginals are $\mu$ and $\nu$ respectively.
Elements in $\mathnormal{\Pi}(\mu, \nu)$ are called couplings of $(\mu, \nu)$.
The Wasserstein distance of order $p$ between $\mu$ and $\nu$ is defined by
\begin{equation}
W_p(\mu,\nu)  = \left(\min\limits_{\pi\in\Pi(\mu,\nu)}\int_\mathcal{X}d(x,y)^p d\pi(x,y)\right)^{1/p}.
\end{equation}
A coupling $\pi$ is said to be optimal if it achieves the minimum.

The space $C([0,T];\mathcal{X})$ of continuous curves in $\mathcal{X}$ equipped with the metric $\rho_{_T}$ is a Polish space, where
$$\rho_{_T}(\alpha, \beta) = \sup\limits_{0 \le t \le T}d(\alpha(t),\beta(t)), \text{~for~} \alpha, \beta\in C([0,T]; \mathcal{X}).$$
For $\alpha, \beta \in C(\mathbb{R}_+; \mathcal{X})$, we define
$$\rho(\alpha,\beta) = \sum\limits_{N\in\mathbb{N}}2^{-N}\frac{\rho_{_N}(\alpha,\beta)}{1 + \rho_{_N}(\alpha,\beta)}.$$
Then $(C(\mathbb{R}_+; \mathcal{X}), \rho)$ is also a Polish space \cite{Whitt}.

Let $P(\mathcal{X})$ denote the set of Borel probability measures on $\mathcal{X}$.
The support of $\mu \in P(\mathcal{X})$ defined by
\begin{equation}
\supp \mu = \{ x \in \mathcal{X}|\ \mu(B_r(x)) > 0 \text{~for~} r > 0\}
\end{equation}
is the smallest closed set on which $\mu$ is concentrated.
We say that a sequence $\{\mu^n\} \subset P(\mathcal{X})$ converges weakly to $\mu \in P(\mathcal{X})$, denoted by $\mu^n \Rightarrow \mu$, if
\begin{equation}
\lim\limits_{n \to \infty} \int_\mathcal{X} f(x) d\mu_n(x) = \int_\mathcal{X} f(x) d\mu(x)
\end{equation}
for every bounded continuous function $f$ on $\mathcal{X}$.

\begin{proposition}[{\cite[Proposition 5.1.8]{Ambrosio}}]\label{support convergence}
If $\{\mu^n\} \subset P(\mathcal{X})$ converges weakly to $\mu \in P(\mathcal{X})$, then for any $x \in \supp \mu$ there exists a sequence $x^n \in \supp \mu^n$ such that $\lim\limits_{n \to \infty} x^n = x$.
\end{proposition}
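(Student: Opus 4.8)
The plan is to reduce everything to the defining characterization of the support, namely that $x \in \supp \mu$ if and only if $\mu(B_r(x)) > 0$ for every $r > 0$, and to exploit the weak convergence $\mu^n \Rightarrow \mu$ through one well-chosen test function for each ball. Fix $x \in \supp \mu$ and a radius $r > 0$. First I would build the Lipschitz bump
$$f_r(y) = \max\left\{0,\ 1 - \frac{d(x,y)}{r}\right\},$$
which is bounded, continuous, satisfies $0 \le f_r \le 1$, is strictly positive exactly on the open ball $B_r(x)$, and vanishes elsewhere. Since $x \in \supp \mu$ we have $\mu(B_r(x)) > 0$, and because $f_r > 0$ throughout this set it follows that $\int_{\mathcal{X}} f_r \, d\mu > 0$.

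Next I would feed $f_r$ into the definition of weak convergence: $\int_{\mathcal{X}} f_r \, d\mu^n \to \int_{\mathcal{X}} f_r \, d\mu > 0$, so there is an index $N(r)$ beyond which $\int_{\mathcal{X}} f_r \, d\mu^n > 0$. As $f_r$ is supported in $B_r(x)$, a positive integral forces $\mu^n(B_r(x)) > 0$ for every $n \ge N(r)$. Then I would use that the complement of $\supp \mu^n$ is $\mu^n$-null: an open ball of positive measure cannot be disjoint from the support, hence $\supp \mu^n \cap B_r(x) \neq \emptyset$ for all $n \ge N(r)$.

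Finally, a diagonal bookkeeping turns this into a single convergent sequence. Taking $r = 1/k$, I obtain indices $N_k := N(1/k)$, which I may assume strictly increasing; for $N_k \le n < N_{k+1}$ I choose $x^n \in \supp \mu^n \cap B_{1/k}(x)$ (nonempty by the previous step), and for the finitely many remaining small $n$ I pick any point of $\supp \mu^n$, which is nonempty because $\mathcal{X}$ is separable. Then $d(x^n,x) < 1/k \to 0$, giving $x^n \to x$ as required. The construction is essentially routine; there is no deep obstacle here, and the only points deserving care are the uniform extraction of a single sequence across all radii via the diagonal argument, together with the elementary but essential observation that a positive-measure open ball must meet the support, which follows from the nullity of $(\supp \mu^n)^c$.
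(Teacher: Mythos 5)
Your proof is correct. Note that the paper itself gives no proof of this proposition --- it is quoted directly from the reference [Ambrosio et al., Proposition 5.1.8] --- so there is no internal argument to compare against; the relevant comparison is with the standard proof behind that citation. That standard route invokes the portmanteau theorem, namely lower semicontinuity of measures of open sets under weak convergence, to get $\liminf_{n\to\infty}\mu^n(B_r(x)) \ge \mu(B_r(x)) > 0$, and then performs the same diagonal extraction you do. Your Lipschitz bump $f_r$ is exactly the test function by which one proves that portmanteau inequality for open sets, so you have in effect inlined it; this makes your argument self-contained from the bare definition of weak convergence as stated in the paper, which is a mild advantage in this context. The two auxiliary facts you use --- that $(\supp \mu^n)^c$ is $\mu^n$-null and that $\supp \mu^n \neq \emptyset$ --- both rest on separability (a Lindel\"of argument writes the complement of the support as a countable union of null open balls), which is available since $\mathcal{X}$ is Polish, and they are consistent with the paper's own description of the support as the smallest closed set on which the measure is concentrated. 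The diagonal bookkeeping (making the $N_k$ strictly increasing, choosing arbitrary support points for the finitely many initial indices) is routine and correctly handled.
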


$S \subset P(\mathcal{X})$ is said to be tight if for any $\varepsilon > 0$,
there exists a compact set $K_\varepsilon \subset \mathcal{X}$ such that $\mu[K_\varepsilon] > 1 - \varepsilon$ for every $\mu \in P(\mathcal{X})$.
Every single probability measure on a Polish space is itself tight (see e.g. \cite[Theorem 1.3]{Billingsley}).
The following theorem indicates why the tightness makes sense.
\begin{theorem}[Prokhorov]\label{prokhorov}
Let $\mathcal{X}$ be a Polish space. $S \subset P(\mathcal{X})$ is tight if and only if it is relatively compact in $P(\mathcal{X})$.
\end{theorem}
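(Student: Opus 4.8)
The plan is to establish the two implications separately. Since $P(\mathcal{X})$ is metrizable in the weak topology when $\mathcal{X}$ is Polish (for instance via the Prokhorov or bounded-Lipschitz metric), relative compactness coincides with relative sequential compactness, so throughout it suffices to reason with sequences. For the direction that tightness implies relative compactness I would first dispose of the compact case and then bootstrap via a compactification. When $\mathcal{X}$ is itself compact, $C(\mathcal{X})$ is a separable Banach space and $P(\mathcal{X})$ sits inside the closed unit ball of its dual; Banach--Alaoglu together with the metrizability of the weak-$*$ topology on bounded subsets of a separable dual shows that $P(\mathcal{X})$ is sequentially weak-$*$ compact, and since $P(\mathcal{X})$ is weak-$*$ closed this yields compactness. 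For general Polish $\mathcal{X}$, I would embed $\mathcal{X}$ homeomorphically as a Borel subset of a compact metric space $\hat{\mathcal{X}}$ (e.g.\ the Hilbert cube). Given $\{\mu^n\} \subset S$, viewed as measures on $\hat{\mathcal{X}}$ it admits a subsequence converging weakly to some $\mu \in P(\hat{\mathcal{X}})$. The crucial step is then to use tightness to show $\mu$ does not charge $\hat{\mathcal{X}} \setminus \mathcal{X}$: for each $\varepsilon$ the compact set $K_\varepsilon \subset \mathcal{X}$ is closed in $\hat{\mathcal{X}}$, so by the Portmanteau theorem $\mu(K_\varepsilon) \ge \limsup_n \mu^n(K_\varepsilon) \ge 1 - \varepsilon$; letting $\varepsilon \to 0$ forces $\mu(\mathcal{X}) = 1$, whence $\mu \in P(\mathcal{X})$ and the convergence is genuine weak convergence in $P(\mathcal{X})$.

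For the converse, that relative compactness implies tightness, I would argue by contradiction using separability and completeness. Fix a countable dense set $\{x_j\}$. The key lemma is that relative compactness forces, for every $\varepsilon, \delta > 0$, the existence of finitely many $\delta$-balls whose union has measure exceeding $1 - \varepsilon$ uniformly over $S$. To prove the lemma, suppose it fails for some $\varepsilon, \delta$; then for each $m$ one may select $\mu^m \in S$ with $\mu^m\bigl(\bigcup_{j \le m} B_\delta(x_j)\bigr) \le 1 - \varepsilon$, pass to a weakly convergent subsequence $\mu^{m_k} \Rightarrow \mu$, and invoke the open-set half of Portmanteau to deduce $\mu\bigl(\bigcup_{j \le m} B_\delta(x_j)\bigr) \le 1 - \varepsilon$ for every fixed $m$; letting $m \to \infty$ and using density of $\{x_j\}$ gives $\mu(\mathcal{X}) \le 1 - \varepsilon$, a contradiction. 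Granting the lemma, for a prescribed $\varepsilon$ I would apply it with $\delta = 1/k$ and tolerance $\varepsilon 2^{-k}$ to obtain finite unions $A_k$ of $(1/k)$-balls with $\mu(A_k) > 1 - \varepsilon 2^{-k}$ for all $\mu \in S$, and set $K = \overline{\bigcap_k A_k}$. This $K$ is closed and totally bounded, hence compact by completeness of $\mathcal{X}$, and $\mu(K) \ge 1 - \sum_k \varepsilon 2^{-k} = 1 - \varepsilon$ for every $\mu \in S$, establishing tightness.

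I expect the main obstacle to be the first implication, specifically controlling the behavior of the limiting measure at the added boundary $\hat{\mathcal{X}} \setminus \mathcal{X}$ after compactification: without tightness, mass could escape to infinity and the weak limit on $\hat{\mathcal{X}}$ would fail to be a probability measure supported on $\mathcal{X}$. The tightness hypothesis is precisely what prevents this, and the Portmanteau inequality on closed sets is the mechanism converting it into the conclusion $\mu(\mathcal{X}) = 1$. By comparison, the completeness of $\mathcal{X}$ enters only in the converse, to upgrade total boundedness to compactness, and is a comparatively routine ingredient.
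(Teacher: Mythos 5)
The paper does not prove this statement: Prokhorov's theorem is quoted as classical background (it is the content of the cited Billingsley reference), so there is no in-paper argument to compare against. Your proof is correct and is essentially the standard textbook argument: for tightness $\Rightarrow$ relative compactness, the Banach--Alaoglu/Riesz treatment of the compact case, then embedding $\mathcal{X}$ into the Hilbert cube and using the closed-set half of Portmanteau on the compact sets $K_\varepsilon$ to keep the limit from charging $\hat{\mathcal{X}} \setminus \mathcal{X}$; for the converse, the ball-covering lemma proved by contradiction via the open-set half of Portmanteau, followed by the $\varepsilon 2^{-k}$ summation and total boundedness plus completeness. The only step you assert without justification is that weak-$*$ convergence in $P(\hat{\mathcal{X}})$, once the limit $\mu$ is known to satisfy $\mu(\mathcal{X}) = 1$, upgrades to weak convergence in $P(\mathcal{X})$; this needs a short argument (for $F$ closed in $\mathcal{X}$ one has $\overline{F} \cap \mathcal{X} = F$ for the closure taken in $\hat{\mathcal{X}}$, so $\limsup_k \mu^{n_k}(F) \le \mu(\overline{F}) = \mu(F)$, and Portmanteau in $P(\mathcal{X})$ applies), but it is routine and does not constitute a gap.
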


There is a tightness criterion for a sequence of probability measures on the space $C(\mathbb{R}_+; \mathcal{X})$.

\begin{theorem}[{\cite[Theorem 4]{Whitt}}]\label{criterion}
Let $\{P^n\}$ be a sequence of probability measures on $C(\mathbb{R}_+; \mathcal{X})$.
Then $\{P^n\}$ is tight if and only if these two conditions hold:

(i) For each $t \ge 0$ and $\eta > 0$, there exists a compact set $K_t$ in $\mathcal{X}$ such that
$$P^n\{x \in C(\mathbb{R}_+; \mathcal{X})|\ x(t) \in K_t\} > 1 - \eta, \text{~for any~} n \ge 1.$$

(ii) For each $j \ge 1$ and $\varepsilon, \eta > 0$, there exists a $\delta \in (0,1)$, and an $n_0 \in \mathbb{N}$ such that
$$P^n\{x \in C(\mathbb{R}_+; \mathcal{X})|\ w_x^j(\delta) \ge \varepsilon\} \le \eta, \text{~for any~} n \ge n_0,$$
where $w_x^j(\delta) = \sup\limits_{\substack{0 \le s,t \le j\\|s-t| < \delta}}d(x(s), x(t))$.
\end{theorem}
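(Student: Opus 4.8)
The plan is to pass through Prokhorov's theorem (Theorem \ref{prokhorov}) and reduce everything to the Ascoli-type characterization of relatively compact subsets of $(C(\mathbb{R}_+;\mathcal{X}),\rho)$. Recall that, since $\rho$ induces uniform convergence on compact time-intervals and $\mathcal{X}$ is complete, a set $A\subset C(\mathbb{R}_+;\mathcal{X})$ has compact closure if and only if (a) for each $t\ge 0$ the section $\{x(t):x\in A\}$ is relatively compact in $\mathcal{X}$, and (b) $A$ is equicontinuous on every interval $[0,j]$, i.e. $\lim_{\delta\to 0}\sup_{x\in A}w_x^j(\delta)=0$ for each $j\ge 1$. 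Conditions (i) and (ii) are precisely the ``in probability'' counterparts of (a) and (b), so the whole argument consists of translating between the two formulations while keeping every estimate uniform in $n$.

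For the necessity direction, assume $\{P^n\}$ is tight and fix $\eta>0$. Theorem \ref{prokhorov} provides a compact $K\subset C(\mathbb{R}_+;\mathcal{X})$ with $P^n(K)>1-\eta$ for all $n$. For (i), the section $K_t:=\{x(t):x\in K\}$ is the continuous image of $K$ under the evaluation $x\mapsto x(t)$, hence compact, and $\{x:x(t)\in K_t\}\supset K$ gives $P^n\{x(t)\in K_t\}>1-\eta$. For (ii), given in addition $\varepsilon>0$ and $j\ge 1$, the equicontinuity of $K$ furnishes a $\delta$ with $w_x^j(\delta)<\varepsilon$ for every $x\in K$; then $\{w_x^j(\delta)\ge\varepsilon\}\subset C(\mathbb{R}_+;\mathcal{X})\setminus K$, so $P^n\{w_x^j(\delta)\ge\varepsilon\}\le\eta$ for all $n$ and $n_0=1$ suffices.

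For the sufficiency direction, fix $\varepsilon>0$ and assemble a single compact set. Fix a countable dense set $\{t_i\}\subset\mathbb{R}_+$. By (i), choose for each $i$ a compact $K_{t_i}\subset\mathcal{X}$ with $P^n\{x(t_i)\notin K_{t_i}\}<\varepsilon 2^{-i-1}$ for all $n$. By (ii), choose for each pair $(j,k)$ a radius $\delta_{j,k}\in(0,1)$ with $P^n\{w_x^j(\delta_{j,k})>1/k\}\le\varepsilon 2^{-j-k-1}$ for all $n$, and then set
$$K=\bigcap_i\{x:x(t_i)\in K_{t_i}\}\cap\bigcap_{j,k}\{x:w_x^j(\delta_{j,k})\le 1/k\}.$$
Each defining set is closed ($x\mapsto x(t_i)$ is continuous and $x\mapsto w_x^j(\delta)$ is lower semicontinuous), so $K$ is closed. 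Its sections over $\{t_i\}$ lie in compact sets, and the constraints $w_x^j(\delta_{j,k})\le 1/k$ force equicontinuity on each $[0,j]$; equicontinuity in turn upgrades relative compactness of the sections from the dense set to all times, so by Ascoli's theorem $K$ is compact. A union bound over the two countable families gives $P^n(C(\mathbb{R}_+;\mathcal{X})\setminus K)<\varepsilon$ for every $n$, and Theorem \ref{prokhorov} then yields tightness.

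The delicate point is the word ``eventually'' hidden in condition (ii): the estimate there is only guaranteed for $n\ge n_0=n_0(j,k)$, and since there are infinitely many pairs $(j,k)$ these thresholds need not be bounded, so no single index serves all constraints at once. The resolution, which I would emphasise, is that for each fixed $(j,k)$ only the finitely many indices $n<n_0(j,k)$ are uncontrolled, and each of these is an individual probability measure on the Polish space $C(\mathbb{R}_+;\mathcal{X})$, hence tight on its own. Invoking single-measure tightness for those finitely many $n$, together with the monotonicity of $\delta\mapsto w_x^j(\delta)$, lets me shrink $\delta_{j,k}$ to a common value for which $P^n\{w_x^j(\delta_{j,k})>1/k\}\le\varepsilon 2^{-j-k-1}$ holds for all $n\ge 1$. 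This is exactly where the remark that every single measure on a Polish space is tight becomes essential, and it is the main obstacle in making the sufficiency estimate uniform in $n$.
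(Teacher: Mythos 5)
The paper does not prove this statement at all: it is imported verbatim from Whitt's article (\cite[Theorem 4]{Whitt}) and used as a black box, so there is no internal proof to compare yours against; what follows is an assessment of your argument on its own merits. Your proof is correct. The necessity direction is the straightforward one (a compact set of uniformly large mass has compact time-sections and is equicontinuous on each $[0,j]$). In the sufficiency direction, the construction of the single closed set $K$, its closedness (lower semicontinuity of $x \mapsto w_x^j(\delta)$ as a supremum of continuous functions), its compactness via Arzel\`a--Ascoli --- including the genuinely necessary step that equicontinuity propagates relative compactness of the sections from the dense set $\{t_i\}$ to all times --- and the union bound with constants summing to $\varepsilon$ are all sound. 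Most importantly, you correctly identify and repair the one real trap, the threshold $n_0(j,k)$ in condition (ii): for each fixed $(j,k)$ only finitely many indices are uncontrolled, and each corresponding $P^n$ can be handled individually, after which monotonicity of $\delta \mapsto w_x^j(\delta)$ lets you shrink $\delta_{j,k}$ to a value working for all $n$. (For those finitely many measures you do not even need single-measure tightness: since every path is uniformly continuous on $[0,j]$, the events $\{w_x^j(\delta) \ge \varepsilon\}$ decrease to the empty set as $\delta \downarrow 0$, so continuity from above of each $P^n$ already suffices.) Two cosmetic slips, neither affecting correctness: in both directions you credit Theorem \ref{prokhorov} for what is merely the definition of tightness --- the existence of one compact set of mass $> 1-\eta$ uniformly in $n$ in the necessity step, and the conclusion at the end of the sufficiency step --- so Prokhorov's theorem is in fact never used in your argument. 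Finally, your route differs in organization from the cited source, which works by restricting to the compact intervals $[0,j]$ and invoking the classical compact-interval criterion there, whereas you build the compact set in $C(\mathbb{R}_+;\mathcal{X})$ globally in one stroke; your version is more self-contained, the cited one is shorter given the compact-interval result.
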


The next theorem provides a more effective description of the convergence with respect to the Wasserstein distance.

\begin{theorem}[{\cite[Theorem 7.1.5]{Ambrosio}}]\label{wasserstein converge}
Let $(\mathcal{X},d)$ be a Polish space and $p \ge 1$.
Let $\{\mu^n\}_{n \in \mathbb{N}} \subset P_p(\mathcal{X})$ and $\mu \in P_p(\mathcal{X})$, then the following statements are equivalent:

(i) for some $x_0 \in \mathcal{X}$,
$\mu^n \Rightarrow \mu$ and
$$\lim\limits_{R \to \infty} \limsup\limits_{n \to \infty} \int_{d(x_0,x) \ge R} d(x_0,x)^p d\mu^n(x) = 0;$$

(ii) $W_p(\mu^n, \mu) \to 0$ as $n \to \infty$.

\end{theorem}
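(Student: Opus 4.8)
The plan is to establish the two implications separately, in each case bridging the Wasserstein distance and the weak topology through the uniform integrability of the $p$-th moment functions $g(x) := d(x_0,x)^p$.

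For the implication (ii) $\Rightarrow$ (i), I would first note that $W_1 \le W_p$ (by H\"older's inequality applied to any coupling), so for every bounded Lipschitz function $f$ one has $|\int f\,d\mu^n - \int f\,d\mu| \le \mathrm{Lip}(f)\,W_p(\mu^n,\mu) \to 0$; since bounded Lipschitz functions are convergence-determining on a Polish space, this yields $\mu^n \Rightarrow \mu$. Next, since $W_p(\nu,\delta_{x_0})^p = \int g\,d\nu$ for any $\nu$, the triangle inequality $|W_p(\mu^n,\delta_{x_0}) - W_p(\mu,\delta_{x_0})| \le W_p(\mu^n,\mu)$ gives convergence of the $p$-th moments $\int g\,d\mu^n \to \int g\,d\mu =: m < \infty$. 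The tail condition then follows from a truncation argument: for $R > 0$ the function $g \wedge R$ is bounded continuous, so weak convergence gives $\int (g \wedge R)\,d\mu^n \to \int (g\wedge R)\,d\mu$, and subtracting this from the moment convergence yields $\int (g-R)^+\,d\mu^n \to \int (g-R)^+\,d\mu$. Since $g \ge 2R$ forces $g - R \ge g/2$, one obtains $\limsup_n \int_{\{g \ge 2R\}} g\,d\mu^n \le 2\int (g-R)^+\,d\mu$, and the right-hand side tends to $0$ as $R \to \infty$ by dominated convergence, which is exactly the tail condition in (i).

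For the implication (i) $\Rightarrow$ (ii), the first observation is that the tail condition, together with the finiteness of each individual moment $\int g\,d\mu^n$, is equivalent to the uniform integrability of the family $\{g\}$ under $\{\mu^n\}$: the $\limsup$ controls all but finitely many indices, and the remaining finitely many are handled one at a time. I would then invoke Skorokhod's representation theorem to realize $\mu^n$ and $\mu$ as the laws of random variables $X_n$ and $X$ on a common probability space with $X_n \to X$ almost surely. Because $d(X_n,X)^p \le 2^{p-1}\big(d(x_0,X_n)^p + d(x_0,X)^p\big)$ and the right-hand side is uniformly integrable (the first term by the hypothesis just reformulated, the second being a fixed integrable function), the family $\{d(X_n,X)^p\}$ is uniformly integrable; combined with $d(X_n,X)^p \to 0$ almost surely, the Vitali convergence theorem gives $\mathbb{E}[d(X_n,X)^p] \to 0$. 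Since $\law(X_n,X)$ is a coupling of $\mu^n$ and $\mu$, we conclude $W_p^p(\mu^n,\mu) \le \mathbb{E}[d(X_n,X)^p] \to 0$.

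The routine estimates above are straightforward; the main conceptual point, and the step I would treat most carefully, is the passage between the tail condition phrased with $\limsup_n$ and genuine uniform integrability (the $\sup_n$ statement), since this is what makes both Vitali's theorem and the truncation argument applicable. A secondary subtlety is that the cost $d(x,y)^p$ is unbounded, so weak convergence alone never suffices and the uniform integrability is precisely the ingredient that licenses exchanging the limit with the integral.
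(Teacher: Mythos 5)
Your proposal is correct, but a comparison with ``the paper's own proof'' is not possible in the usual sense: the paper states this result as a black-box citation of \cite[Theorem 7.1.5]{Ambrosio} (Ambrosio--Gigli--Savar\'e) and gives no proof at all. So the relevant comparison is with the standard textbook arguments. Those proceed inside optimal-transport machinery: for (i) $\Rightarrow$ (ii) one takes optimal plans $\pi^n$ between $\mu^n$ and $\mu$, uses Prokhorov tightness and lower semicontinuity of the cost, and controls the unbounded part of $d(x,y)^p$ by the uniform $p$-moment integrability; for (ii) $\Rightarrow$ (i) one argues much as you do. Your route replaces the coupling/lower-semicontinuity step by Skorokhod's representation theorem (valid here since $\mathcal{X}$ is Polish) followed by Vitali's convergence theorem: realize $\mu^n,\mu$ as laws of $X_n \to X$ a.s., dominate $d(X_n,X)^p \le 2^{p-1}\bigl(d(x_0,X_n)^p + d(x_0,X)^p\bigr)$, and use uniform integrability to pass to the limit in expectation, bounding $W_p^p(\mu^n,\mu)$ by the cost of the (generally non-optimal) coupling $\law(X_n,X)$. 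This is shorter and more probabilistic, at the price of invoking the representation theorem as a black box; the coupling proof is self-contained within transport theory. You also correctly isolate the one genuinely delicate point, namely upgrading the $\limsup_n$ tail condition to a true $\sup_n$ (uniform integrability) by handling the finitely many remaining indices with dominated convergence --- without that step neither Vitali nor the truncation bookkeeping would apply. The (ii) $\Rightarrow$ (i) direction via $W_1 \le W_p$, the identity $W_p^p(\nu,\delta_{x_0}) = \int d(x_0,\cdot)^p\,d\nu$, and the splitting $g = (g \wedge R) + (g-R)^+$ is likewise complete; the only cosmetic discrepancy is that you verify the tail condition on the sets $\{g \ge 2R\}$ rather than $\{d(x_0,\cdot) \ge R\}$, which is an immediate reindexing since both families of sets exhaust $\mathcal{X}$ as $R \to \infty$.
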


\subsection{Geodesics in the Wasserstein space}

Let $(\mathcal{Y},d)$ be a metric space.
The length of a continuous curve $\zeta: [a,b] \to \mathcal{Y}$ is defined by
\begin{equation}
L(\zeta) = \sup_{N \in \mathbb{N}} \sup_{a = t_0 < t_1 < \cdots < t_N = b} \sum_{i = 0}^{N - 1} d(\zeta_{t_i},\zeta_{t_{i+1}}).
\end{equation}
$(\mathcal{Y},d)$ is said to be a length space if for any $x, y \in \mathcal{Y}$,
\begin{equation}\label{length space}
d(x,y) = \inf\limits_{\zeta \in C([0,1];\mathcal{Y})} \{L(\zeta)|\ \zeta_0 = x, ~\zeta_1 = y\}.
\end{equation}
$\mathcal{Y}$ is a geodesic space if the infimum in equation (\ref{length space}) is attainable for any $x, y \in \mathcal{Y}$.
$\zeta$ is called a constant-speed minimizing geodesic segment if
\begin{equation}
d(\zeta_s,\zeta_t) = \frac{|t - s|}{b - a} d(\zeta_a,\zeta_b) \text{~for any~} s,t \in [a,b].
\end{equation}
For convenience, throughout this paper we use the single word ``geodesic'' instead.

The next statement is a straight corollary to \cite[Corollary 7.22]{Villani} via simple reparameterization.
This conclusion is twofold: The Wasserstein space over a complete separable locally compact length space is a geodesic space;
Geodesics in such a Wasserstein space can be considered as probability measures concentrated on the set of geodesics in the ambient space.

\begin{proposition}\label{existence}
Let $p > 1$ and let $(\mathcal{X}, d)$ be a complete separable, locally compact length space.
Given $\mu, \nu \in P_p(\mathcal{X})$, let $L = W_p(\mu, \nu)$.
Then for any continuous curve $(\mu_t)_{0 \le t \le L}$ in $P(\mathcal{X})$ with $\mu_0 = \mu$, $\mu_L = \nu$, the following properties are equivalent:

(i) $\mu_t$ is the law of $\zeta_t$, where $\zeta: [0, L] \to \mathcal{X}$ is a random geodesic such that $(\zeta_{_0}, \zeta_{_L})$ is an optimal coupling;

(ii) $(\mu_t)_{0 \le t \le L}$ is a unit-speed geodesic in the space $P_p(\mathcal{X})$.

Moreover, for any given $\mu, \nu \in P_p(\mathcal{X})$, there exists at least one such curve.
We denote by $T(\mu, \nu)$ the set of unit-speed geodesics from $\mu$ to $\nu$.
\end{proposition}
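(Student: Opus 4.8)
The strategy is to deduce the whole statement from \cite[Corollary 7.22]{Villani} by an affine time change. That corollary, applied to our complete separable locally compact length space $\mathcal{X}$ with $p > 1$, asserts that any $\mu, \nu \in P_p(\mathcal{X})$ are joined by at least one constant-speed minimizing geodesic $(\hat\mu_s)_{s \in [0,1]}$ in $P_p(\mathcal{X})$, and that a curve $(\hat\mu_s)_{s\in[0,1]}$ is such a geodesic if and only if $\hat\mu_s = \law(\gamma_s)$ for some random curve $\gamma$ on $[0,1]$ whose realizations are constant-speed geodesics of $\mathcal{X}$ with $(\gamma_0,\gamma_1)$ an optimal coupling of $(\mu,\nu)$. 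The only discrepancy between this and the Proposition is that the latter is phrased with unit speed on $[0,L]$, where $L = W_p(\mu,\nu)$, rather than with speed $L$ on $[0,1]$. So the task reduces to transporting the equivalence through the reparametrization $s = t/L$.

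First I would dispose of the degenerate case $\mu = \nu$, i.e. $L = 0$, in which both conditions collapse to the constant curve and there is nothing to prove. Assuming $L > 0$, I would use the affine homeomorphism $t \mapsto s = t/L$ from $[0,L]$ onto $[0,1]$ to set up two bijections: to a curve $(\mu_t)_{t\in[0,L]}$ in $P(\mathcal{X})$ associate $\hat\mu_s := \mu_{Ls}$ ($s\in[0,1]$), and to a random curve $\zeta$ on $[0,L]$ associate the random curve $\gamma_s := \zeta_{Ls}$ ($s\in[0,1]$). Both correspondences are bijective, fix the endpoints (hence preserve the endpoint laws $\mu = \hat\mu_0$, $\nu = \hat\mu_1$ and the endpoint pair $(\gamma_0,\gamma_1) = (\zeta_0,\zeta_L)$, so the optimality of that coupling is the same condition on either side), and satisfy $\law(\gamma_s) = \law(\zeta_{Ls})$, so that $\mu_t = \law(\zeta_t)$ for all $t$ exactly when $\hat\mu_s = \law(\gamma_s)$ for all $s$. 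Since composing with a homeomorphism of intervals preserves continuity and measurability, $\zeta$ is a genuine random element of $C([0,L];\mathcal{X})$ precisely when $\gamma$ is one of $C([0,1];\mathcal{X})$, and no integrability is lost in the translation.

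It then remains to match the two pairs of geometric notions under the time change. Substituting $s = \sigma/L$, $t = \tau/L$ into the defining identity of Section 2.2 shows that $\zeta$ is a geodesic on $[0,L]$ if and only if $\gamma$ is a constant-speed geodesic on $[0,1]$, the per-realization speed being merely rescaled by $1/L$. The same substitution turns the unit-speed identity $W_p(\mu_\sigma,\mu_\tau) = |\tau - \sigma|$ into $W_p(\hat\mu_s,\hat\mu_t) = |t - s|\,L$; because $L = W_p(\mu,\nu)$, this is exactly the statement that $(\hat\mu_s)_{s\in[0,1]}$ is a constant-speed minimizing geodesic of $P_p(\mathcal{X})$. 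Feeding these two equivalences into \cite[Corollary 7.22]{Villani} produces the chain (i) $\Leftrightarrow$ [Villani (ii)] $\Leftrightarrow$ [Villani (i)] $\Leftrightarrow$ (ii), and the existence of at least one such curve transfers by applying the reparametrization to the geodesic furnished by that corollary. The argument is thus essentially bookkeeping: all the analytic content, namely the compactness of optimal dynamical plans and the identification of Wasserstein geodesics with laws of random geodesics, is already supplied by \cite[Corollary 7.22]{Villani}. The only points demanding care, and the closest thing to an obstacle, are verifying that the unit-speed condition corresponds precisely to Villani's \emph{minimizing} constant-speed condition (rather than to some merely constant-speed curve) and treating the degenerate case $L = 0$ separately.
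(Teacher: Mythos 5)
Your proposal is correct and follows exactly the route the paper takes: the paper offers no written proof at all, stating only that the proposition is ``a straight corollary to [Villani, Corollary 7.22] via simple reparameterization,'' and your argument is precisely that reparametrization ($s = t/L$) carried out in detail, including the degenerate case $L = 0$ which the paper leaves implicit. Nothing further is needed.
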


Let $f: \mathcal{Y}_1 \to \mathcal{Y}_2$ be a Borel map between Polish spaces, and $\lambda$ be a Borel measure on $\mathcal{Y}_1$.
The push-forward of $\lambda$, denoted by $f_\# \lambda$, is defined by $(f_\# \lambda)[A] = \lambda[f^{-1}(A)]$ for any Borel subset $A$.
In the case of Proposition \ref{existence}, let $\mathnormal{\Pi}$ be the law of $\zeta$, then $\mu_t = (e_t)_\# \mathnormal{\Pi}$ where $e_t: \zeta \mapsto \zeta_t$ be the canonical projection.
Besides, this work naturally prompts us to study rays in the Wasserstein space.

\section{Characterization of rays in the Wasserstein space}

In the rest of this paper, we always assume that $(\mathcal{X},d)$ is a complete, separable non-compact, locally compact length space and the order $p > 1$.
Recall that a ray in $\mathcal{X}$ is a curve $\gamma \in C(\mathbb{R}_+; \mathcal{X})$ satisfying
\begin{equation}
d(\gamma_s,\gamma_t) = |t - s| d(\gamma_0,\gamma_1) \text{~for any~} s,t \ge 0,
\end{equation}
where $k_\gamma = d(\gamma_0,\gamma_1)$ is called the speed of $\gamma$.

\begin{lemma}
Let $\Gamma$ be the set of rays in $\mathcal{X}$, then $\Gamma$ is closed in $(C(\mathbb{R}_+; \mathcal{X}),\rho)$.
As a consequence, $(\Gamma, \rho)$ is a Polish space.
\end{lemma}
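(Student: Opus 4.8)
The plan is to show that $\Gamma$ is closed by taking an arbitrary convergent sequence of rays and verifying that the limit curve again satisfies the defining ray equation. Suppose $\gamma^n \in \Gamma$ and $\gamma^n \to \gamma$ in $(C(\mathbb{R}_+; \mathcal{X}), \rho)$. I first need to understand what this convergence gives me concretely. By the definition of the metric $\rho$, convergence in $\rho$ is equivalent to uniform convergence on every compact time-interval $[0, N]$; that is, $\rho_{_N}(\gamma^n, \gamma) \to 0$ for each $N \in \mathbb{N}$. In particular, for each fixed $t \ge 0$ we have $d(\gamma^n_t, \gamma_t) \to 0$ as $n \to \infty$.

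The key step is a pointwise passage to the limit in the ray equation. Each $\gamma^n$ satisfies
\begin{equation}
d(\gamma^n_s, \gamma^n_t) = |t - s|\, d(\gamma^n_0, \gamma^n_1) \quad \text{for all } s, t \ge 0.
\end{equation}
I would fix $s, t \ge 0$ and let $n \to \infty$. Since the metric $d: \mathcal{X} \times \mathcal{X} \to \mathbb{R}$ is jointly continuous, and since $\gamma^n_s \to \gamma_s$, $\gamma^n_t \to \gamma_t$, $\gamma^n_0 \to \gamma_0$, $\gamma^n_1 \to \gamma_1$ pointwise, the left-hand side converges to $d(\gamma_s, \gamma_t)$ and the right-hand side converges to $|t - s|\, d(\gamma_0, \gamma_1)$. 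Therefore the limit curve $\gamma$ satisfies the ray equation as well, so $\gamma \in \Gamma$. This establishes that $\Gamma$ is closed. For the ``consequence'' clause, I would then invoke the standard fact that a closed subset of a Polish space is itself Polish in the subspace topology: $(C(\mathbb{R}_+; \mathcal{X}), \rho)$ is Polish by the remark following its definition (citing \cite{Whitt}), and completeness is inherited by closed subsets while separability is inherited by all subsets of a separable metric space.

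The main thing to be careful about — rather than a genuine obstacle — is the interplay between the $\rho$-metric and pointwise evaluation. The argument hinges on the fact that $\rho$-convergence implies pointwise convergence $\gamma^n_t \to \gamma_t$ for each fixed $t$; I should make this explicit since $\rho$ is defined through the truncated sup-metrics $\rho_{_N}$ rather than directly through pointwise distances. Once that link is recorded, the rest is a routine continuity argument with no analytic subtlety: the ray equation is a closed condition because it equates two continuous functions of the curve. I do not expect any difficulty from non-compactness or the length-space structure here, since the lemma is purely topological and uses only the continuity of $d$ and the completeness and separability of the ambient path space.
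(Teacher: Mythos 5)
Your proposal is correct and follows essentially the same route as the paper: pass to the limit in the ray identity $d(\gamma^n_s,\gamma^n_t) = |t-s|\,d(\gamma^n_0,\gamma^n_1)$ using the pointwise convergence supplied by $\rho$-convergence, your appeal to joint continuity of $d$ being exactly the triangle-inequality estimate the paper writes out, and then conclude Polishness from closedness in the Polish space $(C(\mathbb{R}_+;\mathcal{X}),\rho)$. The only cosmetic difference is that the paper phrases the argument for a Cauchy sequence in $\Gamma$ (whose limit exists by completeness of the ambient path space) rather than for an arbitrary convergent sequence, which amounts to the same thing.
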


\begin{proof}
Let $\{\gamma^n\}$ be a Cauchy sequence of $\Gamma$, then there exists a $\gamma: \mathbb{R}_+ \to \mathcal{X}$ such that $\rho(\gamma^n,\gamma)\to 0$.
Denote $k = \lim\limits_{n \to \infty} d(\gamma^n_0, \gamma^n_1)$.
The limit exists since $\lim\limits_{n \to \infty} d(\gamma^n_t,\gamma_t) = 0$ for any $t \ge 0$.
For any $t_1, t_2 \ge 0$,
\begin{equation}\label{ieqna}
|d(\gamma^n_{t_1},\gamma^n_{t_2}) - k|t_1 - t_2|| = |t_1 - t_2||d(\gamma^n_0,\gamma^n_1) - k| \to 0.
\end{equation}
By the triangle inequality,
\begin{eqnarray}\label{ieqnb}
|d(\gamma_{t_1},\gamma_{t_2}) - d(\gamma^n_{t_1},\gamma^n_{t_2})| & = & |d(\gamma_{t_1},\gamma_{t_2}) - d(\gamma^n_{t_1},\gamma^n_{t_2})|\nonumber\\
& \le & d(\gamma^n_{t_1},\gamma_{t_1}) + d(\gamma^n_{t_2},\gamma_{t_2}) \to 0.
\end{eqnarray}
Combining (\ref{ieqna}) and (\ref{ieqnb}), we have
$|d(\gamma_{t_1},\gamma_{t_2}) - k|t_1 - t_2|| = 0$, then the conclusion follows.
\end{proof}

As shown in Proposition \ref{existence}, if geodesics in $P_p(\mathcal{X})$ do not share the same lengths,
then their corresponding random curves are defined on different time intervals.
So we introduce the mapping $E_T: C([0,T]; \mathcal{X}) \to C(\mathbb{R}_+; \mathcal{X})$ by
$$(E_T(\zeta))_s = \zeta_{\min\{s, T\}}, s \ge 0$$
in order to extend their sample paths onto the common space $C(\mathbb{R}_+; \mathcal{X})$.

\begin{definition}\label{extend}
Let $(\mu_t)_{0 \le t \le L}$ be a geodesic in $P_p(\mathcal{X})$ and $\pi$ be a probability measure on $C([0,L];\mathcal{X})$ such that $\mu_t = (e_t)_\# \pi$ for $0 \le t \le L$.
The probability measure $(E_L)_\# \pi$ on $C(\mathbb{R}_+; \mathcal{X})$ is called a lifting of the geodesic.
\end{definition}
It can be seen from Proposition \ref{existence} that each geodesic in $P_p(\mathcal{X})$ admits a lifting.

\begin{lemma}\label{tail ieqn}
For $\mu$, $\nu \in P_p(\mathcal{X})$, let $\pi$ be an optimal coupling.
Then for $R > 0$,
\begin{equation}\label{ieqnc}
\pi \{(x,y)|\ d(x,y)>R\} \le \left[\frac{W_p(\mu,\nu)}{R}\right]^p.
\end{equation}
\end{lemma}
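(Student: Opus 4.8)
The plan is to recognize this as a direct application of Markov's inequality to the nonnegative integrable function $(x,y) \mapsto d(x,y)^p$ against the measure $\pi$. The essential input is the defining property of optimality: since $\pi$ achieves the minimum in the definition of $W_p$, one has
$$\int_{\mathcal{X} \times \mathcal{X}} d(x,y)^p \, d\pi(x,y) = W_p(\mu,\nu)^p.$$

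First I would rewrite the event in terms of the $p$-th power. Because $t \mapsto t^p$ is strictly increasing on $[0,+\infty)$, the set $\{(x,y)|\ d(x,y) > R\}$ coincides with $\{(x,y)|\ d(x,y)^p > R^p\}$. Next I would bound the total integral from below by restricting the domain of integration to this event and using that $d(x,y)^p > R^p$ holds there:
$$W_p(\mu,\nu)^p = \int_{\mathcal{X} \times \mathcal{X}} d(x,y)^p \, d\pi \ge \int_{\{d(x,y) > R\}} d(x,y)^p \, d\pi \ge R^p \cdot \pi\{(x,y)|\ d(x,y) > R\}.$$
Dividing through by $R^p$ yields the claimed inequality.

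Since each step is an elementary consequence of the optimality identity together with the monotonicity of the integral, there is no genuine obstacle here; this is the standard Chebyshev-type tail estimate. The only point worth emphasizing is that the right-hand side is finite, and hence the estimate meaningful, precisely because $\mu, \nu \in P_p(\mathcal{X})$ guarantees $W_p(\mu,\nu) < +\infty$. I would also note that the result does not require $\pi$ to be the unique optimal coupling; any minimizer works, and existence of at least one minimizer is already built into the definition of $W_p$ as a minimum.
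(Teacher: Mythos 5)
Your proof is correct and is essentially identical to the paper's: both are the standard Chebyshev/Markov tail estimate, comparing $\int_{d(x,y)>R} R^p\, d\pi \le \int d(x,y)^p\, d\pi = W_p^p(\mu,\nu)$, merely written in the opposite order. No further comment is needed.
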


\begin{proof}
Given $R > 0$, by the definition of $W_p$,
\begin{eqnarray*}
\int_{d(x,y)>R} R^p d\pi(x,y) & \le & \int_{d(x,y)>R}d(x,y)^p d\pi(x,y)\\
& \le & \int_{\mathcal{X}\times\mathcal{X}} d(x,y)^p d\pi(x,y)\\
& = & W_p^p(\mu,\nu).
\end{eqnarray*}
So we obtain the inequality (\ref{ieqnc}).
\end{proof}

\begin{theorem}\label{tight}
Let $\{\nu^n\}$ and $\{\mu^n\}$ be sequences in $P_p(\mathcal{X})$ such that $\{\nu^n\}$ is tight and $L_n := W_p(\mu^n,\nu^n) \to +\infty$.
For each $n$, let $\mathnormal{\Pi}^n$ be a lifting of an element in $T(\nu^n,\mu^n)$, then $\{\mathnormal{\Pi}^n\}$ is tight.
\end{theorem}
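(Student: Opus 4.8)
**The plan is to verify the two conditions of Whitt's tightness criterion (Theorem \ref{criterion}) for the sequence $\{\mathnormal{\Pi}^n\}$ on $C(\mathbb{R}_+;\mathcal{X})$.** Each $\mathnormal{\Pi}^n$ is the lifting of a unit-speed geodesic $(\mu^n_t)_{0\le t\le L_n}$ from $\nu^n$ to $\mu^n$, so by Proposition \ref{existence} it is concentrated on geodesics $\zeta$ whose endpoints $(\zeta_0,\zeta_{L_n})$ form an optimal coupling, extended to $\mathbb{R}_+$ by the constant map $E_{L_n}$. The key structural facts I will exploit are: (a) for $s\le t\le L_n$ the sample path satisfies $d(\zeta_s,\zeta_t)=|t-s|\cdot\frac{W_p(\nu^n,\mu^n)}{L_n}=|t-s|$ since the geodesic has unit speed; and (b) past time $L_n$ the path is frozen, i.e. $\zeta_s=\zeta_{L_n}$ for $s\ge L_n$. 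Condition (ii) of Theorem \ref{criterion} (the equicontinuity/modulus bound) is then essentially immediate: for any two times $s,t$ with $|s-t|<\delta$, the unit-speed property gives $d(\zeta_s,\zeta_t)\le|s-t|<\delta$ deterministically (the frozen tail only decreases distances), so $w^j_\zeta(\delta)\le\delta$ for every sample path, and choosing $\delta<\varepsilon$ makes the relevant probability exactly zero. Thus condition (ii) holds trivially with $n_0=1$.

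\textbf{The real work is condition (i): producing, for each fixed $t\ge 0$ and each $\eta>0$, a single compact set $K_t\subset\mathcal{X}$ capturing mass $>1-\eta$ under every time-$t$ section $(e_t)_\#\mathnormal{\Pi}^n=\mu^n_{\min\{t,L_n\}}$.** My strategy is to show the family of time-$t$ sections $\{\mu^n_{\min\{t,L_n\}}\}_n$ is itself tight in $P(\mathcal{X})$, which by Prokhorov (Theorem \ref{prokhorov}) is equivalent to relative compactness and certainly yields a uniform compact set. I would first reduce to large $n$, since $L_n\to+\infty$ means that for all $n$ beyond some index we have $t\le L_n$, so the section is genuinely $\mu^n_t$ (only finitely many exceptional terms remain, and a finite family is automatically tight, so I can discard them). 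The plan is then to control $\mu^n_t$ by comparing it to the already-tight family $\{\nu^n\}=\{\mu^n_0\}$. The natural tool is Lemma \ref{tail ieqn}: letting $\pi^n$ be the optimal coupling between $\nu^n$ and $\mu^n$ pushed along the geodesic, the displacement over the time interval $[0,t]$ is exactly $t$ on each geodesic, so I can bound how far $\zeta_t$ can wander from $\zeta_0$.

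\textbf{Concretely, I would fix a reference point $x_0$ and a compact ball argument.} Given $\eta>0$, tightness of $\{\nu^n\}$ supplies a compact $K\subset\mathcal{X}$ with $\nu^n(K)>1-\eta/2$ for all $n$. Since $d(\zeta_0,\zeta_t)=t$ pointwise along every lifted geodesic, the time-$t$ marginal is supported within distance $t$ of the time-$0$ marginal in a transport sense; more precisely $\mu^n_t(K^t)\ge\nu^n(K)>1-\eta/2$, where $K^t:=\{x:d(x,K)\le t\}$ is the closed $t$-neighborhood of $K$. The crucial point is that $K^t$ is compact: here is where I use that $\mathcal{X}$ is a complete, locally compact length space, so that closed bounded sets (equivalently, bounded neighborhoods of compact sets) are compact by the Hopf--Rinow--type theorem for such spaces. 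Taking $K_t:=K^t$ then gives $\mathnormal{\Pi}^n\{\zeta:\zeta_t\in K_t\}=\mu^n_t(K_t)>1-\eta$ for all large $n$, and enlarging $K_t$ to absorb the finitely many small-$n$ exceptional sections (each individually tight) completes condition (i). \textbf{The main obstacle I anticipate is precisely justifying that $t$-neighborhoods of compact sets are compact}, i.e. invoking the right completeness/local-compactness/length-space hypothesis to guarantee closed bounded sets are compact; this is the one place where all three standing assumptions on $\mathcal{X}$ genuinely enter, and it is what makes the whole tightness argument go through despite the non-compactness of the ambient space.
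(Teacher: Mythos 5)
Your argument rests on a false premise: that each sample path of the lifted geodesic has unit speed, i.e., that $d(\zeta_s,\zeta_t)=|s-t|$ holds surely. What Proposition \ref{existence} gives is that the \emph{Wasserstein} geodesic has unit speed, $W_p(\nu^n_s,\nu^n_t)=|s-t|$; the individual sample paths are geodesics in $\mathcal{X}$ of \emph{random} speed $d(\zeta_0,\zeta_{L_n})/L_n$, and all that is controlled is the $p$-th moment $\int d(\zeta_0,\zeta_{L_n})^p\,d\mathnormal{\Pi}^n=L_n^p$, since $(\zeta_0,\zeta_{L_n})$ is an optimal coupling of $(\nu^n,\mu^n)$. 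Paths may travel arbitrarily fast, only with correspondingly small probability. This breaks both halves of your proof. For condition (ii), it is not true that $w^j_\zeta(\delta)\le\delta$ deterministically, so the modulus-of-continuity probability is not ``exactly zero''; what is true is $d(\zeta_s,\zeta_t)\le(\delta/L_n)\,d(\zeta_0,\zeta_{L_n})$ for $|s-t|<\delta$, and then the Markov-type estimate of Lemma \ref{tail ieqn} applied to the optimal endpoint coupling gives $\mathnormal{\Pi}^n\{w^j_\zeta(\delta)>\varepsilon\}\le(\delta/\varepsilon)^p$, which is small but nonzero. For condition (i), the inclusion-type bound $\mu^n_t(K^t)\ge\nu^n(K)$ fails for the same reason: a path starting in $K$ need not be within distance $t$ of $K$ at time $t$. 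The correct replacement, which is exactly what the paper does, is again Lemma \ref{tail ieqn}: $\mathnormal{\Pi}^n\{d(\zeta_0,\zeta_t)>R\}\le(t/R)^p$, so one enlarges the radius from $t$ to an $R$ with $(t/R)^p<\eta/2$ and accepts an $\eta$-sized exceptional set rather than none.

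The rest of your skeleton is sound and matches the paper's: verify Whitt's criterion; reduce to $n$ large so that $t\le L_n$ and absorb the finitely many exceptional indices by individual tightness; use tightness of $\{\nu^n\}$ to locate the time-$0$ marginals; and invoke bounded compactness of $\mathcal{X}$ (Hopf--Rinow for complete locally compact length spaces) to produce a compact set --- the paper takes a large closed ball around $x_0$ rather than a neighborhood of $K$, which is an inessential difference. So the proof is repairable by replacing every deterministic displacement bound with the tail bound of Lemma \ref{tail ieqn}; but as written, the two central claims --- $w^j_\zeta(\delta)\le\delta$ surely, and $\mu^n_t(K^t)\ge\nu^n(K)$ --- are false, and handling the random speeds of sample paths is precisely the content of the paper's argument.
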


\begin{proof}
Let $\nu^n_t = (e_t)_\# \mathnormal{\Pi}^n$ for $t \le L_n$.
Fix an arbitrary $t > 0$.

(i) Given any $\eta > 0$, there exists an $N$ such that $L_n \ge t$ for $n > N$.
Recall that each single probability measure on a Polish space is tight.
For $n \le N$, there exists a compact set $K^n_t$ such that
\begin{equation}
\mathnormal{\Pi}^n\{\gamma^n \in C(\mathbb{R}_+; \mathcal{X})|\ \gamma^n_t \in K^n_t\} = \nu_t^n[K^n_t] > 1 - \frac{\eta}{2N}.
\end{equation}
While for $n > N$, by the tightness of $\{\nu^n_0\}$ there exists a constant $D_0$ such that
\begin{equation}\label{ieqn eta}
\mathnormal{\Pi}^n \{\gamma^n|\ d(\gamma^n_0,x_0) > D_0\} = \nu^n_0 \{x|\ d(x,x_0) > D_0\} < \frac{\eta}{4}.
\end{equation}
By Proposition \ref{existence}, for each $n \in \mathbb{N}$, $\pi^n := (e_0, e_{L_n})_\# \mathnormal{\Pi}^n$ is an optimal coupling of $(\nu^n, \mu^n)$.
By Lemma \ref{tail ieqn},
\begin{equation}\label{ieqn t}
\mathnormal{\Pi}^n \{\gamma^n|\ d(\gamma^n_0,\gamma^n_t) > R\}  = \pi^n \{(x,y)|\ d(x,y) > R\} \le \left(\frac{t}{R}\right)^p.
\end{equation}
For $D_t \ge t (4/\eta)^{1/p} + D_0$, from (\ref{ieqn eta}) and (\ref{ieqn t}) we have
\begin{eqnarray*}
&&\mathnormal{\Pi}^n \{\gamma^n|\ d(\gamma^n_t,x_0)>D_t\}\\
& \le & \mathnormal{\Pi}^n \{\gamma^n|\ d(\gamma^n_0,x_0)>D_0\} + \mathnormal{\Pi}^n \{\gamma^n|\ d(\gamma^n_0,x_0) \le D_0, d(\gamma^n_0,\gamma^n_t) > D_t - D_0\}\\
& \le & \mathnormal{\Pi}^n \{\gamma^n|\ d(\gamma^n_0,x_0)>D_0\} + \mathnormal{\Pi}^n \{\gamma^n|\ d(\gamma^n_0,\gamma^n_t) > D_t-D_0\}\\
& \le & \frac{\eta}{4} + \left(\frac{t}{D_t-D_0}\right)^p\\
& \le & \eta/2,
\end{eqnarray*}
which means there is a compact set $K_t = \{x|\ d(x,x_0) \le D_t\} \cup \bigcap\limits_{i = 1}^N K^i_t$ such that
\begin{equation}
\mathnormal{\Pi}^n \{\gamma^n|\ \gamma^n \in K_t\} > 1 - \eta.
\end{equation}

(ii) For any fixed $j \ge 1$, by Definition \ref{extend},
for $n > j$ the curves $(\nu^n_t)_{0 \le t \le j}$ are geodesics in $P_p(\mathcal{X})$.
For any fixed $\varepsilon, \eta > 0$, let $\delta < \varepsilon \eta^{1/p}$, then
\begin{equation}
\mathnormal{\Pi}^n \{\gamma^n|\ w^j_{\gamma^n}(\delta) > \varepsilon \} = \mathnormal{\Pi}^n \left\{\gamma^n \left|\ \sup\limits_{\substack{0 \le s,t \le j\\
|s-t|<\delta}}d(\gamma^n_s,\gamma^n_t) > \varepsilon \right. \right\} \le \left(\frac{\delta}{\varepsilon}\right)^p < \eta
\end{equation}
by using Lemma \ref{tail ieqn} again.

Hence the tightness of $\{\mathnormal{\Pi}^n\}$ follows from Theorem \ref{criterion}.
\end{proof}

Now we are able to characterize rays in $P_p(\mathcal{X})$. 
A similar representation can be found in \cite[Proposition 3.2]{Bertrand} for the 2-Wasserstein space over an Hadamard space.

\begin{corollary}\label{random ray}
The following statements are equivalent:

(i) $(\mu_t)_{t \ge 0}$ is a ray in $P_p(\mathcal{X})$;

(ii) For any $t_1,t_2$ with $0 \le t_1 < t_2, (\mu_t)_{t_1 \le t \le t_2}$ is a geodesic;

(iii) $\mu_t$ is the law of $\gamma_t$,
where $\gamma$ is a random ray such that $(\gamma_{t_1},\gamma_{t_2})$ is an optimal coupling for any $t_1, t_2$ with $0 \le t_1 < t_2$.
\end{corollary}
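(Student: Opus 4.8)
The plan is to prove the cycle of implications (i)$\Rightarrow$(ii)$\Rightarrow$(iii)$\Rightarrow$(i), together with the immediate (ii)$\Rightarrow$(i), so that all three statements become equivalent. The equivalence (i)$\Leftrightarrow$(ii) is purely metric and I would dispatch it first. Writing $k_\mu = W_p(\mu_0,\mu_1)$ for the speed, if $(\mu_t)_{t\ge 0}$ is a ray then for $s,t\in[t_1,t_2]$ one has $W_p(\mu_s,\mu_t)=|s-t|k_\mu=\frac{|s-t|}{t_2-t_1}W_p(\mu_{t_1},\mu_{t_2})$, so each restriction is a geodesic. Conversely, given (ii), for arbitrary $s,t$ set $M=\max\{s,t,1\}$; the geodesic relation on $[0,M]$ gives both $W_p(\mu_0,\mu_M)=M k_\mu$ and $W_p(\mu_s,\mu_t)=\frac{|s-t|}{M}W_p(\mu_0,\mu_M)=|s-t|k_\mu$, which is exactly the ray property.

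For (iii)$\Rightarrow$(i), let $\mathnormal{\Pi}$ be the law of the random ray $\gamma$. Since each realization satisfies $d(\gamma_{t_1},\gamma_{t_2})=|t_2-t_1|k_\gamma$ and $(\gamma_{t_1},\gamma_{t_2})$ is optimal, $W_p^p(\mu_{t_1},\mu_{t_2})=\int d(\gamma_{t_1},\gamma_{t_2})^p\,d\mathnormal{\Pi}=|t_2-t_1|^p\int k_\gamma^p\,d\mathnormal{\Pi}$, so $W_p(\mu_{t_1},\mu_{t_2})=|t_2-t_1|\bigl(\int k_\gamma^p\,d\mathnormal{\Pi}\bigr)^{1/p}$, exhibiting the ray property with speed $(\int k_\gamma^p\,d\mathnormal{\Pi})^{1/p}$.

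The substantial direction is (ii)$\Rightarrow$(iii). Assume $k_\mu>0$ (the case $k_\mu=0$ is a constant curve, trivially represented by constant rays). For each $n$ the restriction $(\mu_t)_{0\le t\le n}$ is a geodesic, so by Proposition \ref{existence} and Definition \ref{extend} it admits a lifting $\mathnormal{\Pi}^n$ concentrated on constant-speed geodesic segments $\zeta:[0,n]\to\mathcal{X}$ (extended via $E_n$) with $(\zeta_0,\zeta_n)$ optimal and $(e_t)_\#\mathnormal{\Pi}^n=\mu_t$ for $t\le n$. Taking $\nu^n=\mu_0$ (a single, hence tight, measure) and noting $L_n=W_p(\mu_0,\mu_n)=nk_\mu\to+\infty$, Theorem \ref{tight} gives tightness of $\{\mathnormal{\Pi}^n\}$, and Theorem \ref{prokhorov} yields a weakly convergent subsequence $\mathnormal{\Pi}^{n_k}\Rightarrow\mathnormal{\Pi}$. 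Since each $e_t$ is $\rho$-continuous, $(e_t)_\#\mathnormal{\Pi}^{n_k}\Rightarrow(e_t)_\#\mathnormal{\Pi}$, and as $(e_t)_\#\mathnormal{\Pi}^{n_k}=\mu_t$ eventually, uniqueness of weak limits gives $(e_t)_\#\mathnormal{\Pi}=\mu_t$ for every $t$. For the cost, a direct computation using that each $\zeta$ is a constant-speed geodesic and $(\zeta_0,\zeta_n)$ is optimal gives $\int d(\gamma_{t_1},\gamma_{t_2})^p\,d\mathnormal{\Pi}^{n_k}=\frac{|t_2-t_1|^p}{n_k^p}W_p^p(\mu_0,\mu_{n_k})=(|t_2-t_1|k_\mu)^p$ for $n_k\ge t_2$; since $d^p$ is nonnegative and lower semicontinuous, $\int d(\gamma_{t_1},\gamma_{t_2})^p\,d\mathnormal{\Pi}\le(|t_2-t_1|k_\mu)^p$, while $(e_{t_1},e_{t_2})_\#\mathnormal{\Pi}$ being a coupling of $(\mu_{t_1},\mu_{t_2})$ forces the reverse inequality. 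Hence $(\gamma_{t_1},\gamma_{t_2})$ is an optimal coupling.

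It remains to show $\mathnormal{\Pi}$-almost every $\gamma$ is a genuine ray; this is the crux, because the approximating measures are supported on segments that are constant beyond time $n$, so the ray property must be recovered in the limit. For $0\le s<t<u$ I would combine the pointwise triangle inequality $d(\gamma_s,\gamma_u)\le d(\gamma_s,\gamma_t)+d(\gamma_t,\gamma_u)$ with the exact $L^p(\mathnormal{\Pi})$-norms $\|d(\gamma_s,\gamma_t)\|_p=(t-s)k_\mu$, $\|d(\gamma_t,\gamma_u)\|_p=(u-t)k_\mu$, $\|d(\gamma_s,\gamma_u)\|_p=(u-s)k_\mu$ just obtained, giving the sandwich $\|d(\gamma_s,\gamma_t)\|_p+\|d(\gamma_t,\gamma_u)\|_p=\|d(\gamma_s,\gamma_u)\|_p\le\|d(\gamma_s,\gamma_t)+d(\gamma_t,\gamma_u)\|_p\le\|d(\gamma_s,\gamma_t)\|_p+\|d(\gamma_t,\gamma_u)\|_p$. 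Equality throughout forces, via equality in Minkowski's inequality for $p>1$, both the pointwise additivity $d(\gamma_s,\gamma_u)=d(\gamma_s,\gamma_t)+d(\gamma_t,\gamma_u)$ and the proportionality $d(\gamma_t,\gamma_u)=\tfrac{u-t}{t-s}\,d(\gamma_s,\gamma_t)$ for $\mathnormal{\Pi}$-a.e.\ $\gamma$. Imposing these relations simultaneously on all rational triples and upgrading by continuity of $\gamma$ shows that $t\mapsto d(\gamma_0,\gamma_t)$ is linear, i.e.\ $d(\gamma_s,\gamma_t)=|s-t|\,d(\gamma_0,\gamma_1)$ for a.e.\ $\gamma$, so $\mathnormal{\Pi}$ is concentrated on $\Gamma$ and (iii) follows. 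I expect this last step to be the main obstacle: the equalities produced by the limiting argument live at the integrated $W_p$ level, and transferring them to the almost-sure pointwise ray property of individual curves is precisely where the Minkowski-equality analysis together with the density-and-continuity upgrade are indispensable.
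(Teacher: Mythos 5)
Your proof is correct, and its overall skeleton coincides with the paper's: both take liftings $\mathnormal{\Pi}^n$ of the restricted geodesics $(\mu_t)_{0\le t\le n}$, extract a weak limit $\mathnormal{\Pi}$ via Theorem \ref{tight} and Prokhorov's theorem, and establish optimality of the pairwise couplings $(e_{t_1},e_{t_2})_\#\mathnormal{\Pi}$ by the same sandwich (lower semicontinuity of $\pi\mapsto\int d^p\,d\pi$ against the coupling lower bound $W_p^p(\mu_{t_1},\mu_{t_2})$). Where you genuinely diverge is the crux, namely showing that $\mathnormal{\Pi}$ is concentrated on rays. The paper does this softly through Proposition \ref{support convergence}: every $\gamma\in\supp\mathnormal{\Pi}$ is a $\rho$-limit of curves $\zeta^{j'}\in\supp\mathnormal{\Pi}^{j'}$, which are (extended) geodesic segments, so the identity $d(\zeta^{j'}_{s_1},\zeta^{j'}_{s_2})=(s_2-s_1)d(\zeta^{j'}_0,\zeta^{j'}_1)$ passes to the pointwise limit; this is short, avoids any use of $p>1$ at this step, and only needs the (easily verified, though unstated) fact that $E_{j'}(\Gamma^{j'})$ is closed so that $\supp\mathnormal{\Pi}^{j'}\subset E_{j'}(\Gamma^{j'})$. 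You instead work entirely at the level of the limit measure: from the exact norms $\|d(\gamma_s,\gamma_t)\|_{L^p(\mathnormal{\Pi})}=(t-s)k_\mu$, which your optimality argument already supplies, you force pointwise additivity and proportionality almost everywhere on each triple $s<t<u$ via the equality case of Minkowski's inequality (valid precisely because $p>1$), and then upgrade over rational triples by continuity of the sample paths. This is more hands-on and uses strict convexity essentially, but it buys a stronger, reusable statement: \emph{any} measure on $C(\mathbb{R}_+;\mathcal{X})$ whose time-marginals are the $\mu_t$ of a ray and all of whose pairwise couplings are optimal is automatically concentrated on rays, with no reference to an approximating sequence of geodesics. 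Two further points: your explicit verification of $(e_t)_\#\mathnormal{\Pi}=\mu_t$ and your separate treatment of the degenerate case $k_\mu=0$ fill in details the paper leaves implicit; and you share with the paper the cosmetic issue that Theorem \ref{tight} is stated for unit-speed geodesics, so applying it to $(\mu_t)_{0\le t\le n}$ of speed $k_\mu$ requires a harmless reparameterization.
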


\begin{proof}
(i) and (ii) are equivalent by the definition of ray and (iii)$\Rightarrow$(ii) is obvious.

(ii)$\Rightarrow$(iii).
For $j \in \mathbb{N}$, let $\mathnormal{\Pi}^{j}$ be a lifting of $(\mu_t)_{0\le t \le j}$ and
$$\Gamma^j = \{\zeta \in C([0,j]; \mathcal{X})|\ \zeta \text{~is a geodesic}\}.$$
By Proposition \ref{existence}, $\mathnormal{\Pi}^{j}$ is concentrated on $E_j(\Gamma^j)$ such that $(e_0,e_j)_\# \mathnormal{\Pi}^{j}$ is an optimal coupling.
By Theorem \ref{tight} and Theorem \ref{prokhorov}, it admits a subsequence $\{\mathnormal{\Pi}^{j'}\}$ which converges weakly to some measure $\mathnormal{\Pi}$ on $C(\mathbb{R}_+; \mathcal{X})$.
For any $\gamma \in \supp \mathnormal{\Pi}$, by Proposition \ref{support convergence}, there exists $\zeta^{j'} \in E_{j'}(\Gamma^{j'})$ with
\begin{equation}\label{zeta to gamma}
\lim\limits_{j' \to \infty} \rho(\zeta^{j'},\gamma) = 0.
\end{equation}
For any $0 \le s_1 < s_2$, choose $T \ge \max\{1, s_2\}$, then (\ref{zeta to gamma}) implies
\begin{equation}\label{zeta converge}
d(\zeta^{j'}_s, \gamma_s) \to 0 \text{~for any~} s \in [0,T].
\end{equation}
Notice that $d(\zeta^{j'}_{s_1}, \zeta^{j'}_{s_2}) = (s_2 - s_1) d(\zeta^{j'}_0, \zeta^{j'}_1)$ when $j' > T$.
(\ref{zeta converge}) yields
\begin{equation}
d(\gamma_{s_1},\gamma_{s_2}) = (s_2 - s_1) d(\gamma_0,\gamma_1).
\end{equation}
It follows that $\gamma \in \Gamma$, and consequently $\mathnormal{\Pi}$ is the law of a random ray.

For $0 \le t_1 < t_2$, denote $\pi_{t_1,t_2}^{j'} = (e_{t_1},e_{t_2})_\# \mathnormal{\Pi}^{j'}, \pi_{t_1,t_2} = (e_{t_1},e_{t_2})_\# \mathnormal{\Pi}$,
then $\pi^{j'}_{t_1,t_2} \Rightarrow \pi_{t_1,t_2}$.
By the lower semicontinuity of the map $\pi \mapsto \int d^p d\pi$,
\begin{eqnarray*}
W_p^p(\mu_{t_1},\mu_{t_2}) & \le & \int_{\mathcal{X} \times \mathcal{X}} d^p(x,y) d\pi_{t_1,t_2}\\
& \le & \liminf\limits_{j' \to \infty} \int_{\mathcal{X} \times \mathcal{X}} d^p(x,y) d\pi_{t_1,t_2}^{j'}\\
& = & W_p^p(\mu_{t_1},\mu_{t_2}),
\end{eqnarray*}
which means $\pi_{t_1,t_2}$ is an optimal coupling.
\end{proof}

\section{Existence of co-rays in the Wasserstein space}
In the conventional case, co-rays play a central role in the study of Busemann functions \cite{Busemann}.
This notion also make sense in the present case.
The existence of co-rays in the Wasserstein space will be proved in this section.

\begin{definition}[Co-ray]
Let $(\mu_t)_{t \ge 0}$ be a unit-speed ray in $P_p(\mathcal{X})$.
We say another ray $(\nu_t)_{t \ge 0}$ is a co-ray from $\nu_0$ to $(\mu_t)_{t \ge 0}$,
if there exist:
\begin{itemize}
  \item  $\{t_n\} \subset \mathbb{R}_+$ tends to infinity,
  \item  $\{\nu_0^n\} \subset P_p(\mathcal{X})$ with $W_p(\nu^n_0,\nu_0) \to 0$,
  \item  for $n \in \mathbb{N}$, $(\nu^n_t)_{0 \le t \le L_n} \in T(\nu_0^n, \mu_{t_n})$ where $L_n = W_p(\nu^n_0, \mu_{t_n})$
\end{itemize}
such that $\lim\limits_{n \to \infty} W_p(\nu_t^n, \nu_t) = 0$ for every $t \ge 0$.
\end{definition}

The gluing lemma (see e.g. \cite[Lemma 5.3.2]{Ambrosio}) is often used in optimal transport to connect two couplings.
However this instrument can not meet our demand while two random curves get involved, so we cite a theorem here in order to obtain another version of gluing lemma.

\begin{theorem}[{\cite[Theorem A.1]{Acosta}}]\label{canonical projections}
Let $J$ be an arbitrary index set.
For each $j \in J$, let $S_j, T_j$ be Polish spaces, $S = \prod\limits_{j \in J} S_j, T = \prod\limits_{j \in J} T_j$.
Also, let $p_j: S \to S_j, q_j: T \to T_j$ be canonical projections, $\phi_j: S_j \to T_j$ be a measurable map.
If $\mu_j$ is a probability measure on $S_j$ and $\lambda$ is a probability measure on $T$ such that $(\phi_j)_\# \mu_j = (q_j)_\# \lambda$, $\forall j \in J$.
Then there exists a probability measure $\sigma$ on $S$ such that:

(i) $p_j \sigma = \mu_j$, for all $j \in J$;

(ii) $((\phi_j \circ p_j)_{j \in J})_\# \sigma = \lambda$.
\end{theorem}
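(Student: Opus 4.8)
The plan is to construct $\sigma$ by a disintegration-and-gluing procedure: I would disintegrate each $\mu_j$ along the fibres of $\phi_j$, then reassemble these fibrewise pieces coordinate-by-coordinate, using $\lambda$ itself as the gluing measure. Write $\nu_j := (\phi_j)_\# \mu_j$, which by hypothesis equals the $j$-th marginal $(q_j)_\# \lambda$ of $\lambda$. Since $S_j$ is Polish and $\phi_j$ is Borel, the disintegration theorem furnishes a Borel family $\{\mu_j^{y}\}_{y \in T_j}$ of probability measures on $S_j$ with $\mu_j^{y}$ concentrated on the fibre $\phi_j^{-1}(y)$ for $\nu_j$-a.e.\ $y$ and
\begin{equation}\label{eq:disint-proposal}
\mu_j = \int_{T_j} \mu_j^{y}\, d\nu_j(y).
\end{equation}

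Next, for each $t = (t_j)_{j \in J} \in T$ I would form the product measure $\kappa_t := \bigotimes_{j \in J} \mu_j^{t_j}$ on $S = \prod_{j} S_j$ and set $\sigma := \int_T \kappa_t\, d\lambda(t)$. Because $J$ may be uncountable, the honest way to pin $\sigma$ down is through its values on cylinder sets: for finitely many indices $j_1, \dots, j_n$ and Borel sets $A_i \subset S_{j_i}$,
\[
\sigma\Big(\bigcap_{i=1}^n p_{j_i}^{-1}(A_i)\Big) = \int_T \prod_{i=1}^n \mu_{j_i}^{t_{j_i}}(A_i)\, d\lambda(t).
\]
This prescription is consistent (enlarging the finite index set by a factor $A = S_{j}$ changes nothing, as $\mu_j^{t_j}(S_j) = 1$), so Kolmogorov's extension theorem produces a genuine Borel probability measure $\sigma$ on $S$.

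To verify (i), I evaluate on a single coordinate: for Borel $A \subset S_j$,
\[
(p_j)_\# \sigma(A) = \int_T \mu_j^{t_j}(A)\, d\lambda(t) = \int_{T_j} \mu_j^{y}(A)\, d\nu_j(y) = \mu_j(A),
\]
the middle step using $\nu_j = (q_j)_\# \lambda$ and the last using \eqref{eq:disint-proposal}. For (ii), set $\Phi := (\phi_j \circ p_j)_{j \in J}$ and evaluate $\Phi_\# \sigma$ on a cylinder $C = \bigcap_{i=1}^n q_{j_i}^{-1}(B_i)$. Since $q_{j_i}\circ\Phi = \phi_{j_i}\circ p_{j_i}$, the preimage $\Phi^{-1}(C)$ is again a cylinder in $S$, whence
\[
\Phi_\# \sigma(C) = \int_T \prod_{i=1}^n \mu_{j_i}^{t_{j_i}}\big(\phi_{j_i}^{-1}(B_i)\big)\, d\lambda(t).
\]
For $\nu_{j_i}$-a.e.\ $y$ the fibre concentration gives $\mu_{j_i}^{y}(\phi_{j_i}^{-1}(B_i)) = \mathbf{1}_{B_i}(y)$, so outside the $\lambda$-null set $\bigcup_{i=1}^n q_{j_i}^{-1}(N_{j_i})$ the integrand equals $\mathbf{1}_C(t)$; hence $\Phi_\#\sigma(C) = \lambda(C)$, and since cylinders generate the Borel $\sigma$-algebra of $T$ we obtain $\Phi_\# \sigma = \lambda$.

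The main obstacle is not a hard estimate but the measure-theoretic bookkeeping forced by the arbitrary index set $J$. Two points deserve care. First, one must check that $t \mapsto \kappa_t$ is $\lambda$-measurable, which reduces to Borel-measurability of each kernel $y \mapsto \mu_j^{y}(A)$, a standard by-product of the disintegration theorem. Second, and more seriously, the ``for a.e.\ $y$'' clauses yield a null set $N_j$ for every $j$, and the uncountable union $\bigcup_{j} q_j^{-1}(N_j)$ need \emph{not} be $\lambda$-null; the device that rescues the argument is precisely to test both (i) and (ii) only against cylinder sets, which involve finitely many coordinates at a time, so that only finite---hence null---unions of exceptional sets ever enter the computation.
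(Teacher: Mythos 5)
The paper never proves this statement: it is quoted as-is from de Acosta's appendix (Theorem A.1 of the cited reference) and is used downstream only through Lemma \ref{modified gluing lemma}, where the index set is $J=\{1,2\}$. So there is no in-paper proof to compare against, and the only question is whether your argument stands on its own. It does. Disintegrating each $\mu_j$ over $\phi_j$ as $\mu_j=\int_{T_j}\mu_j^{y}\,d\nu_j(y)$ with $\nu_j=(\phi_j)_\#\mu_j=(q_j)_\#\lambda$, forming the mixture $\sigma=\int_T\bigl(\bigotimes_{j}\mu_j^{t_j}\bigr)\,d\lambda(t)$ understood through its values on cylinder sets, and then checking both (i) and (ii) against cylinders is the natural route to this gluing theorem, and your closing paragraph correctly isolates and defuses the one genuine danger: the disintegration null sets $N_j$ cannot be unioned over an uncountable $J$, but cylinder sets involve only finitely many coordinates, so only finite (hence null) unions of exceptional sets ever appear. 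The verification of (ii) is also right in its details: for $y\notin N_j$ the fibre concentration gives $\mu_j^{y}(\phi_j^{-1}(B))=\mathbf{1}_B(y)$ with a single null set $N_j$ independent of $B$, and two probability measures agreeing on the $\pi$-system of cylinders coincide on the $\sigma$-algebra it generates.

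Two pieces of bookkeeping deserve to be made explicit, though neither is a gap. First, Kolmogorov's extension theorem wants a consistent family of probability measures on finite subproducts, not merely a set function on rectangles; so for finite $F\subset J$ define $\sigma_F(B)=\int_T\bigl(\bigotimes_{j\in F}\mu_j^{t_j}\bigr)(B)\,d\lambda(t)$ for Borel $B\subset\prod_{j\in F}S_j$, where measurability in $t$ of the integrand for general $B$ follows from the rectangle case by a monotone class argument, and countable additivity follows by dominated convergence. Second, when $J$ is uncountable the products $S$ and $T$ are not metrizable, and cylinders generate the \emph{product} $\sigma$-algebra, which is strictly smaller than the Borel $\sigma$-algebra of the product topology; your phrase ``the Borel $\sigma$-algebra of $T$'' should be read as the product $\sigma$-algebra, which is also the natural domain of $\lambda$ and of $\Phi_\#\sigma$ in de Acosta's formulation. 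For the paper's application, where $J$ is finite and all spaces are Polish, these distinctions disappear and your proof applies verbatim.
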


The modified gluing lemma allow us to glue a random curve and a coupling together.
\begin{lemma}\label{modified gluing lemma}
Let $\mathcal{Y}$ be a Polish space, $\alpha \in P(C([0,T];\mathcal{X})), \beta \in P(\mathcal{X} \times \mathcal{Y})$ and $\pi^i$ be the natural projection onto the $i$-th coordinate, $i = 1,2$.
If $(e_T)_\# \alpha = (\pi^1)_\# \beta$,
then there exists a $\delta \in P(C([0,T];\mathcal{X}) \times \mathcal{Y})$ such that
$$(\pi^1)_\# \delta = \alpha, ~(e_T \pi^1, \pi^2)_\# \delta = \beta.$$
\end{lemma}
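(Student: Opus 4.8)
The plan is to realize this as a special case of the canonical-projection theorem (Theorem 4.2) with a singleton index set $J = \{*\}$, so that the products degenerate to single factors and no genuine infinite product structure is needed. First I would set up the identifications dictated by the statement of Theorem 4.2. Take $J = \{*\}$ and let the single source factor be $S_* = C([0,T];\mathcal{X})$, so that $S = S_* = C([0,T];\mathcal{X})$ and the canonical projection $p_* = p_{*}$ is just the identity. For the target side I would set $T_* = \mathcal{X}$, so $T = T_* = \mathcal{X}$ and $q_*$ is again the identity; the role of the measurable map is played by the evaluation map, $\phi_* = e_T : C([0,T];\mathcal{X}) \to \mathcal{X}$. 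With the input data $\mu_* = \alpha$ and $\lambda = (\pi^1)_\# \beta$, the required compatibility hypothesis $(\phi_*)_\# \mu_* = (q_*)_\# \lambda$ reads $(e_T)_\# \alpha = (\pi^1)_\# \beta$, which is exactly the assumption of the lemma.

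Applying Theorem 4.2 then yields a probability measure $\sigma$ on $S = C([0,T];\mathcal{X})$ with $(p_*)_\# \sigma = \alpha$ and $((\phi_* \circ p_*))_\# \sigma = (\pi^1)_\# \beta$. This $\sigma$ by itself only couples the curve with the first marginal of $\beta$; the second coordinate living in $\mathcal{Y}$ has not yet been attached. To manufacture the desired measure $\delta$ on $C([0,T];\mathcal{X}) \times \mathcal{Y}$, I would next invoke a standard disintegration argument. Disintegrate $\beta$ along its first marginal, writing $\beta = \int_\mathcal{X} \beta_x \, d\big((\pi^1)_\# \beta\big)(x)$, where $\{\beta_x\}_{x \in \mathcal{X}}$ is the family of conditional measures on $\mathcal{Y}$; this is legitimate since all spaces are Polish. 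Then define
$$
\delta = \int_{C([0,T];\mathcal{X})} \delta_{\zeta} \otimes \beta_{e_T(\zeta)} \, d\sigma(\zeta),
$$
that is, glue each curve $\zeta$ to the conditional law $\beta_{e_T(\zeta)}$ on $\mathcal{Y}$ sitting over its endpoint $e_T(\zeta)$.

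It then remains to verify the two asserted marginal identities for $\delta$. The first, $(\pi^1)_\# \delta = \alpha$, follows immediately because integrating out the $\mathcal{Y}$-coordinate returns $\sigma$, and $(p_*)_\# \sigma = \alpha$. For the second, $(e_T \pi^1, \pi^2)_\# \delta = \beta$, I would compute the pushforward against a test function and recombine the disintegration: pushing forward by $(e_T \pi^1, \pi^2)$ sends $\delta$ to $\int_{\mathcal{X}} \delta_x \otimes \beta_x \, d\big((e_T)_\# \sigma\big)(x)$, and since $(e_T)_\# \sigma = (\pi^1)_\# \beta$ this is precisely the reconstruction of $\beta$ from its disintegration. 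The main obstacle, such as it is, is less a deep difficulty than a matter of bookkeeping: one must be careful that the conditional family $x \mapsto \beta_x$ is jointly measurable so that the composite kernel $\zeta \mapsto \beta_{e_T(\zeta)}$ is a legitimate measurable family, which is where the Polish-space hypotheses on $\mathcal{X}$ and $\mathcal{Y}$ are used. A cleaner alternative, which avoids explicit disintegration entirely, is to apply Theorem 4.2 a second time with the roles rearranged so that $\mathcal{Y}$ is incorporated directly into the target product; I would mention this as the preferred route if the abstract projection theorem is set up to accommodate it, since it keeps the argument purely measure-theoretic and sidesteps any measurable-selection subtlety.
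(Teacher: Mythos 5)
Your argument is correct, but the working part of it is genuinely different from the paper's proof. The paper gets the lemma in one stroke from Theorem \ref{canonical projections} with the \emph{two}-element index set $J=\{1,2\}$: take $S_1=C([0,T];\mathcal{X})$, $T_1=\mathcal{X}$, $\phi_1=e_T$, and adjoin the second coordinate by $S_2=T_2=\mathcal{Y}$, $\phi_2=\Id_{\mathcal{Y}}$, with data $\mu_1=\alpha$, $\mu_2=(\pi^2)_\#\beta$, $\lambda=\beta$; the compatibility conditions are exactly the hypothesis $(e_T)_\#\alpha=(\pi^1)_\#\beta$ together with a tautology for the identity factor, and the measure $\sigma$ produced on $S_1\times S_2=C([0,T];\mathcal{X})\times\mathcal{Y}$ is already the desired $\delta$, with no further construction. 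This is precisely the ``cleaner alternative'' you mention in your last sentence but do not carry out. What you actually prove the lemma with is disintegration: writing $\beta=\int_{\mathcal{X}}\beta_x\,d\big((\pi^1)_\#\beta\big)(x)$ and setting $\delta(d\zeta,dy)=\alpha(d\zeta)\,\beta_{e_T(\zeta)}(dy)$. That construction is sound: on Polish spaces the conditional family $x\mapsto\beta_x$ exists, is Borel, and is defined $(\pi^1)_\#\beta$-a.e., hence by the hypothesis $(e_T)_\#\alpha=(\pi^1)_\#\beta$ the kernel $\zeta\mapsto\beta_{e_T(\zeta)}$ is defined $\alpha$-a.e., and your two marginal computations are right. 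Note, though, that your opening application of Theorem \ref{canonical projections} with a singleton index set is vacuous: with $J=\{*\}$ the projection $p_*$ is the identity, so the conclusion forces $\sigma=\alpha$ and merely restates the hypothesis; you could delete that step and work with $\alpha$ directly. The trade-off between the two routes: the paper's argument is purely formal, with nothing to verify beyond two pushforward identities and no measurability bookkeeping, which is exactly what Theorem \ref{canonical projections} is quoted for; yours is constructive, exhibits $\delta$ explicitly as a kernel measure, and needs only the standard disintegration theorem rather than the cited result --- in effect it shows the lemma is the classical gluing lemma with one marginal space replaced by the curve space $C([0,T];\mathcal{X})$.
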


\begin{proof}
Let $S_1 = C([0,T];\mathcal{X}), T_1 = \mathcal{X}, \phi_1 = e_T$;
$S_2 = \mathcal{Y} = T_2, \phi_2 = \Id_{\mathcal{Y}}$;
$\mu_1 = \alpha, \mu_2 = (\pi^2)_\# \beta = (q_2)_\# \beta, \lambda = \beta$.
Then
$$(\phi_1)_\# \mu_1 = (e_T)_\# \alpha = (\pi^1)_\# \beta = (q_1)_\# \beta;$$
$$(\phi_1)_\# \mu_2 = (\Id_{\mathcal{Y}})_\# ((q_2)_\# \beta) = (q_2)_\# \beta.$$

Applying Theorem \ref{canonical projections} for $J = \{1,2\}$,
there exists a probability measure $\delta$ on $S_1 \times S_2 = C([0,T];\mathcal{X}) \times \mathcal{Y}$ such that
$$(\pi^1)_\# \delta = (p_1)_\# \delta = \mu_1 = \alpha,$$
$$(e_T \pi^1, \pi^2)_\# \delta = (\phi_1 \circ p_1, \phi_1 \circ p_2)_\# = \lambda = \beta.$$
\end{proof}

\begin{theorem}\label{coray existence}
Let $(\mu_t)_{t \ge 0}$ be a unit-speed ray in $P_p(\mathcal{X})$, $p > 1$.
Given an arbitrary $\nu_0 \in P_p(\mathcal{X})$, for any:
\begin{itemize}
  \item $\{t_n\}$ increasing to infinity,
  \item $\{\nu^n_0\} \subset P_p(\mathcal{X})$ with $W_p(\nu^n,\nu_0) \to 0$,
  \item $(\nu^n_t)_{0 \le t \le L_n} \in T(\nu^n_0, \mu_{t_n})$, $n \in \mathbb{N}$ where $L_n = W_p(\nu^n_0,\mu_{t_n})$,
\end{itemize}
there exists a subsequence of $\{(\nu^n_t)_{0 \le t \le L_n}\}$ which converges to a co-ray from $\nu_0$ to $(\mu_t)_{t \ge 0}$.
\end{theorem}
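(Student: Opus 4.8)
The plan is to realize the sought co-ray as the law of a random ray obtained as a weak accumulation point of the liftings of the approximating geodesics, and then to upgrade weak convergence to $W_p$-convergence at every fixed time by a uniform integrability estimate.

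First I would verify the two hypotheses of Theorem \ref{tight}. Since $W_p(\nu^n_0,\nu_0)\to 0$, the sequence $\{\nu^n_0\}$ is relatively compact in $P_p(\mathcal{X})$ and hence tight by Prokhorov (Theorem \ref{prokhorov}). Moreover $L_n=W_p(\nu^n_0,\mu_{t_n})\to+\infty$, because the unit-speed property of $(\mu_t)_{t\ge 0}$ and the triangle inequality give $W_p(\nu^n_0,\mu_{t_n})\ge W_p(\mu_0,\mu_{t_n})-W_p(\mu_0,\nu_0)-W_p(\nu_0,\nu^n_0)=t_n-W_p(\mu_0,\nu_0)-o(1)$. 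Theorem \ref{tight} then yields tightness of the liftings $\{\Pi^n\}$ of $(\nu^n_t)_{0\le t\le L_n}$, and by Theorem \ref{prokhorov} I extract a subsequence with $\Pi^{n'}\Rightarrow\Pi$ for some probability measure $\Pi$ on $C(\mathbb{R}_+;\mathcal{X})$.

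Next I would show, exactly as in the proof of Corollary \ref{random ray}, that $\Pi$ is carried by $\Gamma$: for $\gamma\in\supp\Pi$, Proposition \ref{support convergence} provides $\zeta^{n'}\in\supp\Pi^{n'}$ with $\rho(\zeta^{n'},\gamma)\to 0$; since each $\zeta^{n'}$ is the $E_{L_{n'}}$-extension of a geodesic on $[0,L_{n'}]$ and $L_{n'}\to\infty$, passing to the limit in $d(\zeta^{n'}_{s_1},\zeta^{n'}_{s_2})=(s_2-s_1)d(\zeta^{n'}_0,\zeta^{n'}_1)$ shows $\gamma$ is a ray. I then set $\nu_t=(e_t)_\#\Pi$; since $(e_0)_\#\Pi^{n'}=\nu^{n'}_0\to\nu_0$ in $W_p$ while also $(e_0)_\#\Pi^{n'}\Rightarrow(e_0)_\#\Pi$, uniqueness of weak limits gives $(e_0)_\#\Pi=\nu_0$, the prescribed initial measure.

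The crux is to promote the weak convergence $\nu^{n'}_t\Rightarrow\nu_t$ (which follows from continuity of $e_t$) to $W_p(\nu^{n'}_t,\nu_t)\to 0$; by Theorem \ref{wasserstein converge} it suffices to check $\lim_{R\to\infty}\limsup_{n'}\int_{d(x_0,x)\ge R}d(x_0,x)^p\,d\nu^{n'}_t(x)=0$. Using that the sample geodesic has constant speed on $[0,L_{n'}]$, I would split
\[
d(\gamma_t,x_0)\le\Bigl(1+\tfrac{t}{L_{n'}}\Bigr)d(\gamma_0,x_0)+\tfrac{t}{L_{n'}}\,d(\gamma_{L_{n'}},x_0).
\]
The first term is uniformly integrable because $\{\nu^{n'}_0\}$ is (Theorem \ref{wasserstein converge} applied to $\nu^{n'}_0\to\nu_0$). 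The second term is the essential obstacle, since $d(\gamma_{L_{n'}},x_0)$ has law $\mu_{t_{n'}}$ whose $p$-moment grows like $t_{n'}^p\sim L_{n'}^p$, so its $L^p$-norm cannot be controlled directly; this is precisely where the lack of local compactness bites. Here I would exploit the ray structure of $(\mu_t)_{t\ge 0}$: by Corollary \ref{random ray} it admits a lifting $\Sigma$ whose sample rays $\sigma$ satisfy $d(\sigma_{t_{n'}},x_0)\le d(\sigma_0,x_0)+t_{n'}k_\sigma$ with $\int k_\sigma^p\,d\Sigma=1$. Applying the modified gluing lemma (Lemma \ref{modified gluing lemma}) to couple $\Pi^{n'}$ with $\Sigma$ along their common marginal $\mu_{t_{n'}}=(e_{L_{n'}})_\#\Pi^{n'}=(e_{t_{n'}})_\#\Sigma$, I obtain a joint law under which $\gamma_{L_{n'}}=\sigma_{t_{n'}}$ almost surely, whence
\[
\tfrac{t}{L_{n'}}\,d(\gamma_{L_{n'}},x_0)\le\tfrac{t}{L_{n'}}\,d(\sigma_0,x_0)+\tfrac{t\,t_{n'}}{L_{n'}}\,k_\sigma.
\]
Since $t/L_{n'}\to 0$ and $t\,t_{n'}/L_{n'}\to t$, this term is eventually dominated by the single fixed $p$-integrable variable $d(\sigma_0,x_0)+(t+1)k_\sigma$, hence uniformly integrable. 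Combining the two pieces gives the required uniform integrability, and Theorem \ref{wasserstein converge} yields $W_p(\nu^{n'}_t,\nu_t)\to 0$ for each $t\ge 0$.

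Finally I would conclude. For fixed $s,t$ and $n'$ large enough that $L_{n'}>\max\{s,t\}$, the unit-speed property gives $W_p(\nu^{n'}_s,\nu^{n'}_t)=|s-t|$; passing to the $W_p$-limit through the triangle inequality gives $W_p(\nu_s,\nu_t)=|s-t|$, so $(\nu_t)_{t\ge 0}$ is a unit-speed ray. Together with $(e_0)_\#\Pi=\nu_0$ and the pointwise convergence $W_p(\nu^{n'}_t,\nu_t)\to 0$, the data $\{t_{n'}\}$, $\{\nu^{n'}_0\}$ and $(\nu^{n'}_t)_{0\le t\le L_{n'}}\in T(\nu^{n'}_0,\mu_{t_{n'}})$ exhibit $(\nu_t)_{t\ge 0}$ as a co-ray from $\nu_0$ to $(\mu_t)_{t\ge 0}$, completing the argument.
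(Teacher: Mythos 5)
Your proposal is correct and follows essentially the same route as the paper's proof: tightness of the liftings (Theorem \ref{tight}) plus Prokhorov to extract a weak limit, then uniform integrability of the time-$\tau$ marginals obtained by splitting $d(x_0,\gamma_\tau)$ along the geodesic and controlling the endpoint term by gluing (Lemma \ref{modified gluing lemma}) to the ray representation of $(\mu_t)_{t\ge 0}$ from Corollary \ref{random ray}, and finally Theorem \ref{wasserstein converge} to upgrade weak convergence to pointwise $W_p$-convergence and conclude the limit is a unit-speed ray. The only cosmetic differences are that you glue with the whole random ray $\Sigma$ to get the almost-sure identity $\gamma_{L_{n'}}=\sigma_{t_{n'}}$ and the bound $d(\gamma_{L_{n'}},x_0)\le d(\sigma_0,x_0)+t_{n'}k_\sigma$, whereas the paper glues with the two-point optimal coupling $(\eta_0,\eta_{t_n})$ and uses the same bound through the distributional identity $d(\eta_0,\eta_{t_n})=t_n\,d(\eta_0,\eta_1)$, and that you leave the standard domination bookkeeping for uniform integrability (which the paper carries out explicitly with its $R_1,\dots,R_5$ estimates) at the level of a sketch.
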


\begin{proof}
Let $\Lambda^n$ be a lifting of $(\nu_t^n)_{0 \le t \le L_n}$.
By Theorem \ref{tight}, there exists a subsequence of $\{\Lambda^n\}$, still denoted by the same notation,
which converges weakly to a probability measure $\Lambda$ on $C(\mathcal{X})$.
Then for arbitrary fixed $\tau \ge 0$,
$$\nu_\tau^n = (e_\tau)_\# \Lambda^n \Rightarrow (e_\tau)_\# \Lambda := \nu_\tau.$$
To show that $W_p(\nu^n_\tau,\nu_\tau) \to 0$ as $n \to \infty$, by Theorem \ref{wasserstein converge}, it remains to prove that
\begin{equation}\label{uniform integrable}
\lim\limits_{R \to \infty} \limsup\limits_{n \to \infty} \int_{d(x_0,z) \ge R} d(x_0,z)^p d\nu^n_\tau(z) = 0.
\end{equation}
By Proposition \ref{existence}, for each $n$, there exists $\alpha^n \in P(C([0,L_n];\mathcal{X}))$ satisfying
$$(e_t)_\# \alpha^n = \nu_t^n \text{~for~} t \in [0,L_n].$$
By Corollary \ref{random ray}, there is a random ray $(\eta_t)_{t \ge 0}$ such that $(\eta_0,\eta_t)$ are optimal couplings and $\mu_t = \law(\eta_t)$ for any $t \ge 0$.
We write $\beta$ for the distribution of $\eta$.
For each $n$, let $\beta^n = (e_0, e_{t_n})_\# \beta$, then it is an optimal coupling of $(\mu_0,\mu_{t_n})$.
By Lemma \ref{modified gluing lemma},
we can construct a sequence of probability measures $\{\mathnormal{\Pi}^n\} \subset P(C([0,L_n];\mathcal{X}) \times \mathcal{X})$ satisfying
\begin{equation}
(\pi^1)_\# \mathnormal{\Pi}^n = \alpha^n,~(e_{L_n} \pi^1, \pi^2)_\# \mathnormal{\Pi}^n = \beta^n.
\end{equation}
By the triangle inequality,
\begin{eqnarray*}
|L_n - t_n| & = & |W_p(\nu_0^n, \mu_{t_n}) - W_p(\mu_0, \mu_{t_n})|\\
& \le & W_p(\nu_0^n, \mu_0)\\
& \le & W_p(\nu_0^n, \nu_0) + W_p(\nu_0, \mu_0).
\end{eqnarray*}
Then
\begin{equation}\label{equal speed}
\lim\limits_{n \to \infty} \frac{t_n}{L_n} = 1.
\end{equation}

There exists an $N$ such that $L_n > \tau$ for all $n > N$.
In this case, for $\mathnormal{\Pi}^n$-a.e. $(\gamma^n, y) \in C([0,L_n];\mathcal{X}) \times \mathcal{X}$, by the triangle inequality,
\begin{eqnarray*}
d(x_0,\gamma^n_\tau) & \le & d(x_0, \gamma^n_0) + d(\gamma^n_0, \gamma^n_\tau)\\
& = & d(x_0,\gamma^n_0) + \tau/{L_n} d(\gamma^n_0,\gamma^n_{L_n})\\
& \le & d(x_0,\gamma^n_0) + \tau/{L_n} [d(x_0,\gamma^n_0) + d(x_0,y) + d(y,\gamma^n_{L_n})]\\
& = & \left(1 + \frac{\tau}{L_n}\right) d(x_0,\gamma^n_0) + \frac{\tau}{L_n} d(x_0,y) + \frac{\tau}{L_n} d(y,\gamma^n_{L_n}).
\end{eqnarray*}
The first equality follows from that each $\gamma^n$ is a random geodesic.

\begin{center}
\setlength{\unitlength}{1mm}
\begin{picture}(60,38)
\linethickness{1pt}
\put(10,4){\vector(1,0){50}}   
\put(10,4){\circle*{1}}        
\put(8,1.5){$y$}
\put(62,3){$\eta_t$}
\put(50,4){\circle*{1}}        
\put(48,0.6){$\gamma^n_{_{L_n}}$}
\qbezier(20,35)(40,20)(50,4)   
\put(20,35){\circle*{1}}       
\put(21,36.5){$\gamma^n_0$}
\put(35.2,22){\circle*{1}}     
\put(37,22){$\gamma^n_t$}
\put(4,20){\circle*{1}}        
\put(0,19.5){$x_0$}
\bezier{20}(4,20)(10,28)(20,35)
\bezier{26}(4,20)(20,22)(35.2,22)
\bezier{16}(4,20)(7,10)(10,4)
\end{picture}
\end{center}

Applying Jensen's inequality we have
\begin{eqnarray*}
& & \int_{d(x_0,z) \ge R} d^p(x_0,z) d\nu_\tau^n(z)\\
& = & \int_{d(x_0,\gamma_\tau^n) \ge R} d^p(x_0,\gamma_\tau^n) d\mathnormal{\Pi}^n(\gamma^n,y)\\
& \le & 3^p \left(\frac{\tau}{L_n} + 1\right)^p \int_{d(x_0,\gamma_\tau^n) \ge R} d^p(x_0,\gamma_0^n) d\mathnormal{\Pi}^n(\gamma^n,y)\\
& & + \left(\frac{3\tau}{L_n}\right)^p \int_{d(x_0,\gamma_\tau^n) \ge R} d^p(x_0,y) d\mathnormal{\Pi}^n(\gamma^n,y)\\
& & + \left(\frac{3\tau}{L_n}\right)^p \int_{d(x_0,\gamma_\tau^n) \ge R} d^p(y,\gamma^n_{L_n}) d\mathnormal{\Pi}^n(\gamma^n,y)\\
& \le & 3^p \left(\frac{\tau}{L_n} + 1\right)^p \int_{d(x_0,\gamma_\tau^n) \ge R} d^p(x_0,\gamma_0^n) d\mathnormal{\Pi}^n(\gamma^n,y)\\
& & + \left(\frac{3\tau}{L_n}\right)^p W_p^p(\delta_{x_0},\mu_0)
+ \left(\frac{3\tau}{L_n}\right)^p \int_{d(x_0,\gamma_\tau^n) \ge R} d^p(y,\gamma^n_{L_n}) d\mathnormal{\Pi}^n(\gamma^n,y)\\
& := & 3^p \left(1 + \frac{\tau}{L_n}\right)^p I_1 + \left(\frac{3\tau}{L_n}\right)^p W_p^p(\delta_{x_0},\mu_0) + \left(\frac{3\tau}{L_n}\right)^p I_2.
\end{eqnarray*}
We will consider the three terms above respectively.

(i) Let $M = \sup\limits_n \{\frac{\tau}{L_n}\}$.
For arbitrary $\varepsilon > 0$, since $W_p(\nu_0^n,\nu_0) \to 0$, by Theorem \ref{wasserstein converge} there exists an $R_1$ such that
\begin{equation}
\limsup\limits_{n \to \infty} \int_{d(x_0,z) \ge R_1} d^p(x_0,z) d\nu_0^n(z) < \frac{\varepsilon}{4 \cdot 3^p (1+M)^p}.
\end{equation}
We can further find an $R_2 > R_1$ such that when $R > R_2$,
\begin{equation}
\left(\frac{R_1 \tau}{R - R_1}\right)^p < \frac{\varepsilon}{4 \cdot 3^p (1+M)^p}.
\end{equation}

In this case we estimate $I_1$ as following:
\begin{eqnarray*}
I_1 & = & \int_{d(x_0,\gamma_\tau^n) \ge R} d^p(x_0,\gamma_0^n) d\mathnormal{\Pi}^n(\gamma^n,y)\\
& = & \int_{d(x_0,\gamma_0^n) \ge R_1,\ d(x_0,\gamma_\tau^n) \ge R} d^p(x_0,\gamma_0^n) d\mathnormal{\Pi}^n(\gamma^n,y)\\
& & + \int_{d(x_0,\gamma_0^n) < R_1,\ d(x_0,\gamma_\tau^n) \ge R} d^p(x_0,\gamma_0^n) d\mathnormal{\Pi}^n(\gamma^n,y)\\
& \le & \int_{d(x_0,\gamma_0^n) \ge R_1} d^p(x_0,\gamma_0^n) d\mathnormal{\Pi}^n(\gamma^n,y)
+ R_1^p \cdot \alpha^n \{\gamma^n|\ d(\gamma_0^n,\gamma_\tau^n) > R - R_1\}\\
& \le & \int_{d(x_0,z) \ge R_1} d^p(x_0,z) d\nu_0^n(z) + \left(\frac{R_1 \tau}{R - R_1}\right)^p.
\end{eqnarray*}
The last inequality is obtained from Lemma \ref{tail ieqn}.
Consequently,
\begin{equation}
\limsup\limits_{n \to \infty} 3^p \left(1 + \frac{\tau}{L_n}\right)^p I_1
\le \limsup\limits_{n \to \infty} 3^p \left(1 + \frac{\tau}{L_n}\right)^p \frac{\varepsilon}{2 \cdot 3^p (1 + M)^p} < \frac{\varepsilon}{2}.
\end{equation}

(ii) Since $W_p(\mu_0,\mu_1) = 1$, there exists an $R_3$ such that
\begin{equation}\label{I2ieqn1}
\int_{d(\eta_0,\eta_1) \ge R_3} d^p(\eta_0,\eta_1) d\beta(\eta) < \frac{\varepsilon}{6 \cdot (3\tau)^p}.
\end{equation}
From the construction of $\mathnormal{\Pi}^n$, we have
\begin{eqnarray*}
& & \int_{d(y,\gamma^n_{L_n}) \ge t_n R_3} d^p(y,\gamma^n_{L_n}) d\mathnormal{\Pi}^n(\gamma^n,y)\\
& = & \int_{d(\eta_0,\eta_{t_n}) \ge t_n R_3} d^p(\eta_0,\eta_{t_n}) d\beta^n(\eta_0,\eta_{t_n})\\
& = & t^p_n \int_{d(\eta_0,\eta_1) \ge R_3} d^p(\eta_0,\eta_1) d\beta(\eta).
\end{eqnarray*}
Due to the tightness of $\{\nu_0^n\}$, there is an $R_4 > R_3$ with
\begin{equation}\label{I2ieqn2}
\nu_0^n\{z|\ d(x_0,z) \ge R_4\} < \frac{\varepsilon}{6 \cdot (3\tau R_3)^p} \text{~for all~} n > N.
\end{equation}
Applying the inequality (\ref{ieqnc}) again, we can obtain another $R_5 > R_4$ such that when $R > R_5$,
\begin{equation}\label{I2ieqn3}
\alpha^n \{\gamma^n|\ d(\gamma_0^n,\gamma_\tau^n) > R - R_4\} < \frac{\varepsilon}{6 \cdot (3\tau R_3)^p} \text{~for all~} n > N.
\end{equation}
In this case,
\begin{eqnarray*}
I_2 & = & \int_{d(x_0,\gamma_\tau^n) \ge R} d^p(y, \gamma_{L_n}^n) d\mathnormal{\Pi}^n(\gamma^n,y)\\
& = & \int_{d(x_0,\gamma_\tau^n) \ge R,\ d(y,\gamma_{L_n}^n) \ge t_n R_3} d^p(y,\gamma_{L_n}^n) d\mathnormal{\Pi}^n(\gamma^n,y)\\
& & + \int_{d(x_0,\gamma_\tau^n) \ge R,\ d(y,\gamma_{L_n}^n) < t_n R_3,\ d(x_0,\gamma_0^n) \ge R_4} d^p(y,\gamma_{L_n}^n) d\mathnormal{\Pi}^n(\gamma^n,y)\\
& & + \int_{d(x_0,\gamma_\tau^n) \ge R,\ d(y,\gamma_{L_n}^n) < t_n R_3,\ d(x_0,\gamma_0^n) < R_4} d^p(y,\gamma_{L_n}^n) d\mathnormal{\Pi}^n(\gamma^n,y)\\
& \le & \int_{d(y,\gamma_{L_n}^n) \ge t_n R_3} d^p(y,\gamma_{L_n}^n) d\mathnormal{\Pi}^n(\gamma^n,y)\\
& & + (t_n R_3)^p \mathnormal{\Pi}^n \{(\gamma^n,y)|\ d(x_0,\gamma_0^n) \ge R_4\}\\
& & + (t_n R_3)^p \mathnormal{\Pi}^n \{(\gamma^n,y)|\ d(\gamma_0^n,\gamma_\tau^n) > R - R_4\}\\
& = & t_n^p \int_{d(\eta_0,\eta_1) \ge R_3} d^p(\eta_0,\eta_1) d\beta(\eta)\\
& & + (t_n R_3)^p \nu_0^n \{z|\ d(x_0,z) \ge R_4\}\\
& & + (t_n R_3)^p \alpha^n \{\gamma^n|\ d(\gamma_0^n,\gamma_\tau^n) > R - R_4\}.
\end{eqnarray*}
Combining this inequality with (\ref{I2ieqn1}), (\ref{I2ieqn2}), (\ref{I2ieqn3}) and (\ref{equal speed}), we have
\begin{equation}
\limsup\limits_{n \to \infty} \left(\frac{3\tau}{L_n}\right)^p I_2 \le \frac{\varepsilon}{2}.
\end{equation}

(iii) Notice that $\lim\limits_{n \to \infty} (\frac{3\tau}{L_n})^p W_p^p(\delta_{x_0},\mu_0) = 0$. We obtain
\begin{equation}
\limsup\limits_{n \to \infty} \int_{d(x_0,z) \ge R} d^p(x_0,z) d\nu_\tau^n(z) < \varepsilon \text{~when~} R > \max\{R_2, R_5\}.
\end{equation}
We conclude from (\ref{uniform integrable}) that $\lim\limits_{n \to \infty} W_p(\nu_t^n, \nu_t) = 0$ for arbitrary $t \ge 0$, thus $\nu_t \in P_p(\mathcal{X})$.
Moreover, for $t,s \ge 0$,
$$W_p(\nu_t,\nu_s) = \lim\limits_{n \to \infty} W_p(\nu_t^n, \nu_s^n) = |t - s|,$$
which means $(\nu_t)_{t \ge 0}$ is also a unit-speed ray in $P_p(\mathcal{X})$.
Therefore it is a co-ray from $\nu_0$ to $(\mu_t)_{t \ge 0}$.
\end{proof}

The first part of Theorem \ref{main result 2} follows directly from Theorem \ref{coray existence}.

A complete and locally compact length space, by the Hopf-Rinow Theorem (see \cite[Theorem 2.5.28]{Burago}), is boundedly compact i.e., every closed metric ball is compact.
Then each point in $\mathcal{X}$ is the origin of some unit-speed rays, which is due to \cite[Proposition 10.1.1]{Papadopoulos}.
It is easily seen that the mapping $x \mapsto \delta_x$ is an isometric embedding from $\mathcal{X}$ to $P_p(\mathcal{X})$.
Let $\gamma$ be a unit-speed ray in $\mathcal{X}$ and $\mu_t = \delta_{\gamma_t}$, then $(\mu_t)_{t \ge 0}$ is a unit-speed ray in $P_p(\mathcal{X})$ accordingly.
Notice that for each $\nu_0 \in P_p(\mathcal{X})$, co-rays from $\nu_0$ to $(\mu_t)_{t \ge 0}$ are some of the unit-speed rays with the origin $\nu_0$, thus Theorem \ref{main result 1} holds.

\section{Busemann functions on the Wasserstein space}
\begin{definition}
Let $(\mu_t)_{t \ge 0}$ be a unit-speed ray in $P_p(\mathcal{X})$.
The Busemann function associated with $(\mu_t)_{t \ge 0}$ is defined by
\begin{equation}\label{busemann function}
b_\mu(\nu) = \lim\limits_{t \to +\infty} [W_p(\nu,\mu_t) - t].
\end{equation}
\end{definition}

\begin{remark}
To show that the limit in (\ref{busemann function}) exists and is finite, by triangle inequality,
$$W_p(\nu,\mu_0) \ge W_p(\mu_0,\mu_t) - W_p(\nu,\mu_t) = t - W_p(\nu, \mu_t).$$
For $0 \le t_1 \le t_2$,
$$[t_2 - W_p(\nu,\mu_{t_2})] - [t_1 - W_p(\nu,\mu_{t_1})]
= W_p(\mu_{t_2},\mu_{t_1}) + W_p(\nu,\mu_{t_2}) - W_p(\nu,\mu_{t_1}) \ge 0.$$
Thereby $t - W_p(\nu,\mu_t)$ is bounded and non-decreasing with respect to $t$.
\end{remark}

\begin{remark}
$b_\mu$ is a Lipschitz function since
\begin{equation}\label{Lipschitz}
|b_\mu(\nu^2) - b_\mu(\nu^1)| = \lim\limits_{t \to \infty} |W_p(\nu^2, \mu_t) - W_p(\nu^1,\mu_t)| \le W_p(\nu^1,\nu^2).
\end{equation}
\end{remark}

Busemann functions have the following fundamental properties.
For the conventional case, we refer to \cite{Peterson}.
\begin{proposition}\label{gradient line}
Let $(\mu_t)_{t \ge 0}$ be a unit-speed ray in $P_p(\mathcal{X})$.
If $(\nu_t)_{t \ge 0}$ is a co-ray from $\nu_0$ to $(\mu_t)_{t \ge 0}$, then
\begin{enumerate}[(i)]
\item $b_\mu(\nu_{t_1}) - b_\mu(\nu_{t_2}) = t_2 - t_1, \text{~for any~} t_1, t_2 \ge 0$;
\item $b_\mu(\lambda) \le b_\nu(\lambda) + b_\mu(\nu_0), \text{~for each~} \lambda \in P_p(\mathcal{X})$.
\end{enumerate}
\end{proposition}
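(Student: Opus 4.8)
The plan is to derive (i) directly from the defining structure of the co-ray together with the already established existence of the Busemann limit, and then to obtain (ii) as a formal consequence of (i) via a triangle inequality. No new compactness or tightness input will be needed beyond what the co-ray definition already supplies.

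For (i), recall that the co-ray $(\nu_t)_{t \ge 0}$ arises as the pointwise-$W_p$ limit of unit-speed geodesics $(\nu^n_t)_{0 \le t \le L_n} \in T(\nu^n_0, \mu_{t_n})$ with $W_p(\nu^n_0,\nu_0) \to 0$, $t_n \to \infty$, and $L_n = W_p(\nu^n_0, \mu_{t_n})$. The key observation is that each $\nu^n$ reaches $\mu_{t_n}$ at its terminal time $L_n$, so for every fixed $s$ and all $n$ large enough that $L_n \ge s$ (which eventually holds since $L_n \to \infty$) one has $W_p(\nu^n_s, \mu_{t_n}) = L_n - s$. First I would combine this with $W_p(\nu^n_s, \nu_s) \to 0$ and the triangle inequality to write $W_p(\nu_s, \mu_{t_n}) = (L_n - s) + \varepsilon^s_n$ with $|\varepsilon^s_n| \le W_p(\nu^n_s, \nu_s) \to 0$. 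Since the limit in (\ref{busemann function}) genuinely exists, it may be evaluated along the subsequence $t_n$, giving
\begin{equation*}
b_\mu(\nu_s) = \lim\limits_{n \to \infty}[W_p(\nu_s, \mu_{t_n}) - t_n] = \lim\limits_{n \to \infty}[(L_n - t_n) - s + \varepsilon^s_n].
\end{equation*}
Because $|L_n - t_n| \le W_p(\nu^n_0,\nu_0) + W_p(\nu_0,\mu_0)$ remains bounded (as already used in the proof of Theorem \ref{coray existence}) and $\varepsilon^s_n \to 0$, the left-hand side forces $\lim\limits_{n \to \infty}(L_n - t_n)$ to exist; writing $C$ for its value we obtain $b_\mu(\nu_s) = C - s$ for every $s \ge 0$ (so in fact $C = b_\mu(\nu_0)$). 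Statement (i) is then immediate, since $b_\mu(\nu_{t_1}) - b_\mu(\nu_{t_2}) = (C - t_1) - (C - t_2) = t_2 - t_1$.

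For (ii), I would start from the triangle inequality $W_p(\lambda, \mu_s) \le W_p(\lambda, \nu_t) + W_p(\nu_t, \mu_s)$, valid for all $s, t \ge 0$. Subtracting $s$ and letting $s \to \infty$---which is legitimate because both $W_p(\lambda, \mu_s) - s$ and $W_p(\nu_t, \mu_s) - s$ converge, to $b_\mu(\lambda)$ and $b_\mu(\nu_t)$ respectively---yields $b_\mu(\lambda) \le W_p(\lambda, \nu_t) + b_\mu(\nu_t)$ for each fixed $t$. Substituting $b_\mu(\nu_t) = b_\mu(\nu_0) - t$ from part (i) gives $b_\mu(\lambda) \le [W_p(\lambda, \nu_t) - t] + b_\mu(\nu_0)$, and finally letting $t \to \infty$ and invoking the definition of $b_\nu$ produces the claimed bound $b_\mu(\lambda) \le b_\nu(\lambda) + b_\mu(\nu_0)$.

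The hard part is entirely concentrated in the limit passage in (i): one must verify that the error $\varepsilon^s_n$ truly vanishes---this is exactly the pointwise-$W_p$ convergence encoded in the co-ray---and, more subtly, that $L_n - t_n$ converges rather than merely staying bounded. I expect to settle the latter precisely as above, by playing off the boundedness estimate from Theorem \ref{coray existence} against the independently known existence of the Busemann limit, so that the already-defined quantity $b_\mu(\nu_s)$ pins down $\lim\limits_{n \to \infty}(L_n - t_n)$ and exhibits its independence of $s$. Once (i) is in hand, (ii) is purely formal.
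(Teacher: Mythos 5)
Your proposal is correct and takes essentially the same route as the paper: part (i) rests on the geodesic identity $W_p(\nu^n_s,\mu_{t_n}) = L_n - s$, the pointwise convergence $W_p(\nu^n_s,\nu_s)\to 0$, and the already-established existence of the Busemann limit evaluated along $t_n$ (the paper cancels $L_n$ by subtracting the identities at $t_1,t_2$, while you instead track $L_n - t_n \to b_\mu(\nu_0)$ to get the affine formula $b_\mu(\nu_s) = b_\mu(\nu_0)-s$; this is a cosmetic difference). Part (ii) is the paper's argument verbatim: triangle inequality, let $s\to\infty$, substitute (i), let $t\to\infty$.
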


\begin{proof}
(i) Assume the geodesic sequence $(\nu_t^n)_{0 \le t \le L_n} \in T(\nu_0^n,\mu_{t_n})$ converges to $(\nu_t)_{t \ge 0}$ as $n \to \infty$.
Let $0 \le t_1 \le t_2$, for any $n > t_2$ we have
$W_p(\nu_0^n,\mu_{t_n}) = W_p(\nu_0^n,\nu_{t_i}^n) + W_p(\nu^n_{t_i},\mu_{t_n})$, $i = 1,2.$
Then
\begin{eqnarray*}
& & |W_p(\nu_{t_1},\mu_{t_n}) - W_p(\nu_{t_2},\mu_{t_n}) - (t_2 - t_1)|\\
& = & |W_p(\nu_{t_1},\mu_{t_n}) - W_p(\nu_{t_2},\mu_{t_n}) - [W_p(\nu^n_{t_1},\mu_{t_n}) - W_p(\nu^n_{t_2},\mu_{t_n})]|\\
& \le & W_p(\nu^n_{t_1},\nu_{t_1}) + W_p(\nu^n_{t_2},\nu_{t_2}) \to 0.
\end{eqnarray*}

(ii) For $s,t \ge 0$, by the triangle inequality,
\begin{eqnarray*}
W_p(\lambda,\mu_s) - s & \le & W_p(\lambda,\nu_t) + W_p(\nu_t,\mu_s) - s\\
& = & [W_p(\lambda,\nu_t) - t] + [W_p(\nu_t,\mu_s) - s] + t.
\end{eqnarray*}
Letting $s \to +\infty$ and applying (i) we have
\begin{eqnarray*}
b_\mu(\lambda) & \le & [W_p(\lambda,\nu_t) - t] + [b_\mu(\nu_t) + t]\\
& = & [W_p(\lambda,\nu_t) - t] + b_\mu(\nu_0)
\end{eqnarray*}
Let $t \to +\infty$, then the inequality follows.
\end{proof}

For $p > 1$, $\mathcal{X}$ is non-branching if and only if $P_p(\mathcal{X})$ is non-branching (see \cite[Corollary 7.32]{Villani}).
Based on this fact, we have the following lemma.

\begin{lemma}\label{nonbranching coray}
Assume $\mathcal{X}$ is non-branching.
Let $(\mu_t)_{t \ge 0}$ be a unit-speed ray in $P_p(\mathcal{X})$.
If there is another unit-speed ray $(\nu_t)_{t \ge 0}$ satisfying
\begin{equation}\label{negative gradient line}
b_\mu(\nu_t) - b_\mu(\nu_s) = s - t, \text{~for any~} s,t \ge 0,
\end{equation}
then for any $\tau > 0$, the subray $(\nu_{t+\tau})_{t \ge 0}$ is the unique co-ray from $\nu_\tau$ to $(\mu_t)_{t \ge 0}$.
\end{lemma}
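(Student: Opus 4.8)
The plan is to prove the sharper statement that \emph{every} co-ray from $\nu_\tau$ to $(\mu_t)_{t \ge 0}$ must coincide with the subray $(\nu_{t+\tau})_{t \ge 0}$. Uniqueness is then immediate, and existence follows at once: Theorem \ref{coray existence} (applied with initial measure $\nu_\tau$, using geodesics of $T(\nu_\tau,\mu_n)$ supplied by Proposition \ref{existence}) guarantees that at least one co-ray from $\nu_\tau$ exists, and the identification forces it to be $(\nu_{t+\tau})_{t \ge 0}$. The driving mechanism is the interplay of the hypothesis \eqref{negative gradient line}, the necessary unit-rate decay of Proposition \ref{gradient line}(i), the $1$-Lipschitz bound \eqref{Lipschitz}, and the non-branching of $P_p(\mathcal{X})$ (which holds since $\mathcal{X}$ is non-branching).

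So let $(\sigma_t)_{t \ge 0}$ be an arbitrary co-ray from $\nu_\tau$, hence $\sigma_0 = \nu_\tau$. First I would record the values of $b_\mu$ along both rays. From \eqref{negative gradient line} one gets $b_\mu(\nu_s) = b_\mu(\nu_0) - s$, and since $\sigma$ is a co-ray, Proposition \ref{gradient line}(i) gives $b_\mu(\sigma_r) = b_\mu(\nu_\tau) - r = b_\mu(\nu_0) - \tau - r$. Next I would form the concatenation
\[
\hat\nu_t =
\begin{cases}
\nu_t, & 0 \le t \le \tau,\\
\sigma_{t-\tau}, & t \ge \tau,
\end{cases}
\]
which is continuous because $\nu_\tau = \sigma_0$, and claim that it is a unit-speed ray.

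To verify $W_p(\hat\nu_s,\hat\nu_t) = |s-t|$, the only nontrivial case (taking $s \le t$) is $s \le \tau \le t$. The upper bound $W_p(\nu_s,\sigma_{t-\tau}) \le (\tau - s) + (t - \tau) = t-s$ is the triangle inequality along $\nu$ and $\sigma$. For the matching lower bound I would invoke that $b_\mu$ is $1$-Lipschitz, together with $b_\mu(\nu_s) = b_\mu(\nu_0) - s$ and $b_\mu(\sigma_{t-\tau}) = b_\mu(\nu_0) - t$, which yield $W_p(\nu_s,\sigma_{t-\tau}) \ge |b_\mu(\nu_s) - b_\mu(\sigma_{t-\tau})| = t - s$. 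Thus $\hat\nu$ is a unit-speed ray agreeing with $\nu$ on the nontrivial interval $[0,\tau]$; restricting both to $[0,T]$ for any $T > \tau$ produces two geodesics of $P_p(\mathcal{X})$ that coincide on $[0,\tau]$, so non-branching forces $\hat\nu_t = \nu_t$ for all $t$. In particular $\sigma_t = \hat\nu_{t+\tau} = \nu_{t+\tau}$, the desired identification.

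The main obstacle, and the reason the non-branching hypothesis cannot be dropped, is the lower-bound step: a priori the concatenation of two rays through a common point need not be length-minimizing, and it is precisely the relation \eqref{negative gradient line} (matching the unit-rate decay that Proposition \ref{gradient line}(i) forces on \emph{every} co-ray) that makes $b_\mu$ decrease at the maximal rate along $\hat\nu$ and hence, via \eqref{Lipschitz}, certifies that $\hat\nu$ is a genuine geodesic. The role of $\tau > 0$ is then to furnish a nontrivial overlap interval on which to apply the non-branching property; this is what upgrades ``$\hat\nu$ is a ray'' to ``$\hat\nu = \nu$'' and delivers existence and uniqueness simultaneously.
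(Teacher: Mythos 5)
Your proposal is correct and follows essentially the same route as the paper's proof: concatenate $\nu|_{[0,\tau]}$ with the candidate co-ray, get the upper bound from the triangle inequality and the matching lower bound from the $1$-Lipschitz property of $b_\mu$ together with the unit-rate decay along $\nu$ (hypothesis \eqref{negative gradient line}) and along the co-ray (Proposition \ref{gradient line}(i)), then invoke non-branching of $P_p(\mathcal{X})$ to identify the concatenation with $\nu$. The only difference is that you make the existence half explicit via Theorem \ref{coray existence}, which the paper leaves implicit.
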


\begin{proof}
For $\tau > 0$, assume $(\tilde{\nu}_t)_{t \ge 0}$ is a co-ray from $\nu_\tau$ to $(\nu_t)_{t \ge 0}$.
Let
\begin{equation}
\nu'_t = \left\{
\begin{aligned}
&\nu_t, &0 \le t \le \tau,\\
&\tilde{\nu}_{t - \tau}, &t \ge \tau.
\end{aligned}\right.
\end{equation}
We claim that $(\nu'_t)_{t \ge 0}$ is a ray.
Since $(\nu_t)_{t \ge 0}$ and $(\tilde{\nu}_t)_{t \ge 0}$ are unit-speed rays, it is obvious for $s,t \le \tau$ or $s,t \ge \tau$ that
$$W_p(\nu'_s,\nu'_t) = |s - t|.$$
For $s < \tau < t$, by triangle inequality and the definition of rays,
\begin{eqnarray*}
W_p(\nu'_s,\nu'_t) & \le & W_p(\nu'_s,\nu'_\tau) + W_p(\nu'_\tau, \nu'_t)\\
& = & W_p(\nu_s,\nu_\tau) + W_p(\tilde{\nu}_0, \tilde{\nu}_{t - \tau})\\
& = & (\tau - s) + (t - \tau)\\
& = & t-s.
\end{eqnarray*}
On the other hand, by (\ref{negative gradient line}) and Proposition \ref{gradient line},
\begin{eqnarray*}
t - s & = & (\tau - s) + (t - \tau)\\
& = & [b_\mu(\nu_s) - b_\mu(\nu_\tau)] + [b_\mu(\tilde{\nu}_0) - b_\mu(\tilde{\nu}_{t - \tau})]\\
& = & [b_\mu(\nu'_s) - b_\mu(\nu'_\tau)] + [b_\mu(\nu'_\tau) -  b_\mu(\nu'_t)]\\
& = & b_\mu(\nu'_s) -  b_\mu(\nu'_t)\\
& \le & W_p(\nu'_s, \nu'_t).
\end{eqnarray*}
Here the last inequality follows from (\ref{Lipschitz}).
This proves our claim.
Since $P_p(\mathcal{X})$ is non-branching, the ray $(\nu'_t)_{t \ge 0}$ coincides with $(\nu_t)_{t \ge 0}$.
\end{proof}
Notice that (\ref{negative gradient line}) holds for every co-ray.
Lemma \ref{nonbranching coray} shows the second conclusion of Theorem \ref{main result 2}.

\begin{theorem}\label{gradient line op}
Assume $\mathcal{X}$ is non-branching.
Let $(\mu_t)_{t \ge 0}$ and $(\nu_t)_{t \ge 0}$ be two unit-speed rays in $P_p(\mathcal{X})$.
If there exists a sequence $\{t_n\}$ tending to $0$ such that $(\nu_{t+t_n})_{t \ge 0}$ is the unique co-ray from $\nu_{t_n}$ to $(\mu_t)_{t \ge 0}$,
then $(\nu_t)_{t \ge 0}$ is a co-ray from $\nu_0$ to $(\mu_t)_{t \ge 0}$.
\end{theorem}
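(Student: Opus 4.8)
The plan is to unwind the hypothesis into explicit approximating geodesics and then verify the definition of co-ray for $(\nu_t)_{t \ge 0}$ by a diagonal argument, never invoking any Busemann function. Since for each $n$ the subray $(\nu_{t+t_n})_{t \ge 0}$ is, in particular, a co-ray from $\nu_{t_n}$ to $(\mu_t)_{t \ge 0}$, the definition of co-ray supplies sequences (indexed by $m$) $r^n_m \to \infty$, base points $\beta^{m,n}_0$ with $W_p(\beta^{m,n}_0, \nu_{t_n}) \to 0$, and unit-speed geodesics $(\beta^{m,n}_t)_{0 \le t \le \ell_{m,n}} \in T(\beta^{m,n}_0, \mu_{r^n_m})$ with lengths $\ell_{m,n} \to \infty$, such that $W_p(\beta^{m,n}_t, \nu_{t+t_n}) \to 0$ as $m \to \infty$ for each fixed $t \ge 0$. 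First I would promote this pointwise-in-$t$ convergence to uniform convergence on compact time intervals: because $(\beta^{m,n}_t)_t$ and $(\nu_{t+t_n})_t$ are both unit-speed, the functions $t \mapsto W_p(\beta^{m,n}_t, \nu_{t+t_n})$ are $2$-Lipschitz uniformly in $m$, and uniformly Lipschitz functions converging pointwise to $0$ converge uniformly on each $[0,K]$ (the domains eventually contain $[0,K]$ since $\ell_{m,n} \to \infty$).

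Second I would make the diagonal choice. For each $n$ select $m(n)$ so large that $r^n_{m(n)} \ge n$, that $W_p(\beta^{m(n),n}_0, \nu_{t_n}) < 1/n$, and that $\sup_{0 \le t \le n} W_p(\beta^{m(n),n}_t, \nu_{t+t_n}) < 1/n$; the last is possible precisely by the uniform-on-$[0,n]$ convergence just established. Abbreviating $\zeta^n := \beta^{m(n),n}$, $\rho^n := \zeta^n_0$ and $S_n := r^n_{m(n)}$, this yields one geodesic $(\zeta^n_t)_{0 \le t \le \ell_n} \in T(\rho^n, \mu_{S_n})$ for each $n$, with $S_n \to \infty$ and, using that $(\nu_t)$ is unit-speed,
$$W_p(\rho^n, \nu_0) \le W_p(\rho^n, \nu_{t_n}) + W_p(\nu_{t_n}, \nu_0) < \tfrac1n + t_n \to 0.$$

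Finally I would check the defining conditions of a co-ray for $(\nu_t)_{t \ge 0}$ based at $\nu_0$. For a fixed $t \ge 0$ and every $n \ge t$ the triangle inequality and the unit speed of $(\nu_t)$ give
$$W_p(\zeta^n_t, \nu_t) \le W_p(\zeta^n_t, \nu_{t+t_n}) + W_p(\nu_{t+t_n}, \nu_t) < \tfrac1n + t_n \to 0.$$
Thus $S_n \to \infty$, $W_p(\rho^n, \nu_0) \to 0$, $(\zeta^n_t) \in T(\rho^n, \mu_{S_n})$, and $W_p(\zeta^n_t, \nu_t) \to 0$ for every $t \ge 0$, which is exactly the assertion that $(\nu_t)_{t \ge 0}$ is a co-ray from $\nu_0$ to $(\mu_t)_{t \ge 0}$; alternatively one may feed $\{(\zeta^n_t)\}$ into Theorem \ref{coray existence} and identify its limit co-ray with $(\nu_t)$ through the same estimate. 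I expect the main obstacle to be this diagonal selection: one must extract a single geodesic per $n$ that simultaneously starts within $1/n$ of $\nu_0$ and shadows the entire ray $(\nu_t)$ on the growing window $[0,n]$ up to the vanishing shift $t_n$, and this is exactly why the uniform-on-compacts upgrade of the first step is indispensable, the co-ray hypothesis itself furnishing control only at each individual time.
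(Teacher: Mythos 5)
Your proof is correct, but it follows a genuinely different route from the paper's. The paper does not attempt a diagonal argument: it fixes $\tau \ge 1$, passes to a subsequence with $t_{n'} \le 2^{-n'}$, uses the hypothesis to pick for each $n'$ a single geodesic $(\nu^{n'}_t)_{0\le t\le L_{n'}} \in T(\nu_{t_{n'}}, \mu_{s_{n'}})$ that is close to the co-ray only at the one time $\tau$, feeds this sequence into the compactness machinery of Theorem \ref{coray existence} to extract a limit co-ray $(\lambda_t)_{t\ge 0}$ from $\nu_0$, and then identifies $\lambda$ with $\nu$: the closeness at $\tau$ gives $\lambda_\tau=\nu_\tau$, Lemma \ref{nonbranching coray} (uniqueness of the co-ray issued from $\nu_\tau$) gives $\lambda_t=\nu_t$ for $t \ge \tau$, and the non-branching of $P_p(\mathcal{X})$ propagates the identification down to $[0,\tau]$. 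Your argument instead verifies the definition of co-ray for $(\nu_t)_{t\ge 0}$ directly: you upgrade the pointwise convergence supplied by the co-ray hypothesis to locally uniform convergence via the uniform $2$-Lipschitz bound on $t \mapsto W_p(\beta^{m,n}_t,\nu_{t+t_n})$, then make a diagonal selection of one geodesic per $n$ that shadows $(\nu_{t+t_n})$ on $[0,n]$ within $1/n$, starts within $1/n+t_n$ of $\nu_0$, and targets $\mu_{S_n}$ with $S_n \ge n$; the triangle inequality then closes the verification. This is more elementary --- it bypasses Theorem \ref{coray existence}, Proposition \ref{gradient line} and Lemma \ref{nonbranching coray}, and in fact never uses non-branching or the uniqueness part of the hypothesis --- so it proves a strictly stronger statement: the conclusion holds whenever each $(\nu_{t+t_n})$ is merely \emph{some} co-ray from $\nu_{t_n}$, with no structural assumption on $\mathcal{X}$ beyond the standing ones. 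What the paper's route buys is economy relative to its already-developed toolkit (the compactness theorem and the uniqueness lemma do all the work, and no equicontinuity or diagonal bookkeeping is needed); what yours buys is self-containedness and weaker hypotheses, at the cost of carrying the two-index approximation explicitly.
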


\begin{proof}
Extract a decreasing subsequence of $\{t_n\}$, denote by $\{t_{n'}\}$, such that $t_{n'} \le 2^{-n'}$.
Let $\tau \ge 1$ be a constant.
For each $n'$, by Lemma \ref{nonbranching coray}, $(\nu_{t+t_{n'}})_{t \ge 0}$ is the unique co-ray from $\nu_{t_{n'}}$ to $(\mu_t)_{t \ge 0}$.
Then there exist $s_{n'} > \max\{\tau, n'\}$ and geodesic $(\nu^{n'}_t)_{0 \le t \le L_{n'}} \in T(\nu_{t_{n'}}, \mu_{s_{n'}})$ such that
$W_p(\nu^{n'}_\tau, \nu_{\tau + t_{n'}}) < 2^{-n'}$, which gives
\begin{equation}\label{coray construction}
W_p(\nu^{n'}_\tau,\nu_\tau) < 2^{-n'+1}
\end{equation}
where $L_{n'} = W_p(\nu_{t_{n'}},\mu_{s_{n'}})$.

By Theorem \ref{coray existence}, there exists a subsequence $\{(\nu^{n''}_t)_{0 \le t \le L_{n''}}\}$
converging to a co-ray $(\lambda_t)_{t \ge 0}$ from $\nu_0$ to $(\mu_t)_{t \ge 0}$.
It suffices to show that $(\lambda_t)_{t \ge 0}$ coincides with $(\nu_t)_{t \ge 0}$.
We can see from (\ref{coray construction}) that $\lambda_\tau = \nu_\tau$.
Let $\tilde{\nu}^{n''}_t = \nu^{n''}_{t + \tau}$, then $(\tilde{\nu}^{n''}_t)_{0 \le t \le L_{n''} - \tau}$ converges to the co-ray $(\lambda_{t + \tau})_{t \ge 0}$.
By Lemma \ref{nonbranching coray}, $\lambda_t = \nu_t$ for every $t \ge \tau$.
Thus the conclusion follows from that $P_p(\mathcal{X})$ is non-branching.
\end{proof}

Combining Proposition \ref{gradient line}, Lemma \ref{nonbranching coray} and Theorem \ref{gradient line op}, we obtain Theorem \ref{main result 3}.

\begin{definition}
Let $(\mathcal{Y}, d)$ be a metric space.
The set
$$\mathcal{V}(\mathcal{Y})= \left\{u: \mathcal{Y} \to \mathbb{R}\left| \text{~for any~} y \in \mathcal{Y}, u(y)= \min_{z \in \mathcal{Y} \backslash \{y\}}\{d(y,z)+u(z)\}\right.\right\}$$
is said to be the metric viscosity class of $\mathcal{Y}$.
\end{definition}
For $u\in \mathcal{V}$ we mean that:
\begin{itemize}
\item for any $x$ and $y$, $u(x)\leq d(x,y)+u(y)$;
\item for any $x$, there exists $y\neq x$ such that $u(x)=d(x,y)+u(y)$.
\end{itemize}
On a non-compact complete Riemannian manifold $(M, g)$, $u$ is a viscosity solution to the Eikonal equation $|\nabla u|_g = 1$ if and only if $u \in \mathcal{V}(M)$ (see for instance \cite{Cui, Fathi}).
Besides, Busemann functions on $(M,g)$ are viscosity solutions to the Eikonal equation (see e.g. \cite{CuiCheng}).
Unfortunately, for $P_p(\mathcal{X})$ with $p \neq 2$, viscosity solutions can not be defined as usual because lack of proper differential structure.
These facts motivate us to consider such a set for $P_p(\mathcal{X})$.

\begin{theorem}
If $(\mu_t)_{t \ge 0}$ is a unit-speed ray in $P_p(\mathcal{X})$, then $b_\mu \in \mathcal{V}(P_p(\mathcal{X}))$.
\end{theorem}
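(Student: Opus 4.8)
The plan is to verify directly the two conditions that characterize membership in $\mathcal{V}(P_p(\mathcal{X}))$, namely the global triangle-type inequality and the pointwise attainment of equality. The first condition, $b_\mu(\nu^1) \le W_p(\nu^1,\nu^2) + b_\mu(\nu^2)$ for all $\nu^1,\nu^2 \in P_p(\mathcal{X})$, is immediate from the definition: inserting the triangle inequality $W_p(\nu^1,\mu_t) \le W_p(\nu^1,\nu^2) + W_p(\nu^2,\mu_t)$ into (\ref{busemann function}) and subtracting $t$ before passing to the limit in $t$ gives the claim. This is simply the one-sided form of the Lipschitz estimate (\ref{Lipschitz}).

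The substantive content is the attainment condition: for every $\nu \in P_p(\mathcal{X})$ I must exhibit some $\lambda \ne \nu$ with $b_\mu(\nu) = W_p(\nu,\lambda) + b_\mu(\lambda)$. First I would invoke Theorem \ref{main result 2}(i) (equivalently Theorem \ref{coray existence}) to produce a co-ray $(\nu_t)_{t \ge 0}$ from $\nu$ to $(\mu_t)_{t \ge 0}$, so that $\nu_0 = \nu$. Then Proposition \ref{gradient line}(i), applied with $t_1 = 0$ and $t_2 = s$, yields $b_\mu(\nu_0) - b_\mu(\nu_s) = s$, that is $b_\mu(\nu) = s + b_\mu(\nu_s)$. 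Since $(\nu_t)_{t \ge 0}$ is unit-speed, $W_p(\nu_0,\nu_s) = s$, so fixing any $s > 0$ and setting $\lambda := \nu_s$ gives
$$b_\mu(\nu) = s + b_\mu(\nu_s) = W_p(\nu,\lambda) + b_\mu(\lambda),$$
while $\lambda \ne \nu$ because $W_p(\nu,\lambda) = s > 0$. Together with the first condition, which guarantees $b_\mu(\nu) \le W_p(\nu,z) + b_\mu(z)$ for every $z$, this shows that $\lambda$ actually realizes the minimum in the definition of $\mathcal{V}$, not merely equality for one candidate.

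There is no serious obstacle remaining at this stage: the genuine difficulty has already been absorbed into the existence of co-rays (Theorem \ref{coray existence}) and the gradient-line identity (Proposition \ref{gradient line}(i)). The only points to record carefully are that the achieving measure $\lambda = \nu_s$ is distinct from $\nu$, which follows from strict positivity of $s$, and that the subsolution inequality upgrades the equality at $\lambda$ into the required minimality. Thus both bullet conditions listed after the definition of $\mathcal{V}$ hold, and therefore $b_\mu \in \mathcal{V}(P_p(\mathcal{X}))$.
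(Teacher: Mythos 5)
Your proof is correct and follows essentially the same route as the paper: the subsolution inequality via the Lipschitz estimate (\ref{Lipschitz}), then existence of a co-ray from Theorem \ref{coray existence} combined with Proposition \ref{gradient line}(i) and the unit-speed property to produce the minimizer $\lambda = \nu_s$. The only difference is that you explicitly record $\lambda \ne \nu$ (since $W_p(\nu,\nu_s) = s > 0$), a point the paper leaves implicit.
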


\begin{proof}
For any $\nu_0,\lambda \in P_p(\mathcal{X})$, from (\ref{Lipschitz}) we have $b_\mu(\nu_0) \le W_p(\nu_0, \lambda) + b_\mu(\lambda)$.
By Theorem \ref{coray existence}, there exists a co-ray $(\nu_t)_{t \ge 0}$ from $\nu_0$ to $(\mu_t)_{t \ge 0}$.
Thus by Proposition \ref{gradient line},
\begin{eqnarray*}
b_\mu(\nu_0) & = & t + b_\mu(\nu_t)\\
& = & W_p(\nu_0,\nu_t) + b_\mu(\nu_t)\\
& = & \min_{\lambda \in P_p(\mathcal{X}) \backslash \{\nu_0\}} W_p(\nu_0,\lambda) + b_\mu(\lambda).
\end{eqnarray*}
\end{proof}

\bibliographystyle{amsplain}

\end{document}